\newtheorem{thm}{Theorem}[section]
\newtheorem{cor}[thm]{Corollary}
\newtheorem{lem}[thm]{Lemma}
\newtheorem{prop}[thm]{Proposition}
\newtheorem{cnj}[thm]{Conjecture}
\theoremstyle{mydefinition}
\newtheorem{dfn}[thm]{Definition}
\theoremstyle{myremark}
\newtheorem{exa}[thm]{Example}
\newtheorem{prob}[thm]{Open problem}
\title{On Frobenius Numbers of Shifted Power Sequences}
\author{ Feihu Liu$^{1}$ and Guoce Xin$^{2, *}$}
\address{ $^{1, 2}$School of Mathematical Sciences,  Capital Normal University,
 Beijing 100048,  PR China}
\email{$^1$\texttt{liufeihu7476@163.com}\ \  \& $^2$\texttt{guoce\_xin@163.com}}
\date{\today}
\thanks{$*$ This work was partially supported by NSFC(12071311).}
\begin{document}
\maketitle

\begin{abstract}
We resolve the open problem of characterizing the Frobenius number $g(A)$ for shifted square sequences $A = (a, a+1^2, \ldots, a+k^2)$, confirming a conjecture of Einstein et al. (2007). By combining a combinatorial reduction to an optimization problem with Lagrange's Four-Square Theorem and generating function techniques, we derive an explicit formula for $g(A)$: a piecewise quadratic polynomial in $a$, classified by residue classes modulo $k^2$.
\end{abstract}

\def\D{{\mathcal{D}}}

\noindent
\begin{small}
 \emph{Mathematic subject classification}: Primary 11D07; Secondary 05A15, 11B75, 11D04.
\end{small}

\noindent
\begin{small}
\emph{Keywords}: Diophantine problem; Frobenius number; Four-square theorem; Generating function.
\end{small}

\section{Introduction}
Let $A = (a_1, a_2, \ldots, a_n)$ be a sequence of relatively prime positive integers, each at least $2$. The Frobenius number $g(A)$, defined as the largest integer not representable as a nonnegative integer linear combination of elements in $A$, has been extensively studied. For a comprehensive treatment, see \cite{Ramrez Alfonsn}. When $n=2$, Sylvester \cite{J. J. Sylvester,J. J. Sylvester1} established the formula $g(a_1,a_2) = a_1a_2 - a_1 - a_2$ in 1882. For $n=3$, a formula involving rational functions was introduced by Denham \cite{G.Denham}, and further investigated by Tripathi \cite{A. Tripathi1}. However, for $n \geq 4$, no general formula for $g(A)$ is known, though numerous special cases have been resolved (cf. \cite{M. Hujter,A. Brauer,Roberts1,E. S. Selmer,A. Tripathi2,A. L. Dulmage}). Computational approaches have been explored by Kannan \cite{R. Kannan} and Ram\'irez Alfons\'in \cite{J. L. Ramrez Alfonsn}.

Our primary objective is to resolve an open problem posed by Einstein, Lichtblau, Strzebonski, and Wagon \cite{D. Einstein}: characterizing $g(A)$ for the shifted square sequence $A = (a, a+1^2, a+2^2, \ldots, a+k^2)$. In 2007, these authors analyzed cases $k \in \{1,\dots,7\}$ using geometric algorithms and conjectured that $g(A)$ takes the form $\frac{1}{k^2}(a^2 + ca) - v$, where $c$ and $v$ are integers depending on $k$ and the residue class of $a$ modulo $k^2$, valid for sufficiently large $a$ (see \cite[Section 17]{D. Einstein}).

We prove this conjecture using the combinatorial method developed by Liu-Xin \cite{Liu-Xin}, leveraging Lagrange's Four-Square Theorem from number theory. To this end, we recall essential notation and results from \cite{Liu-Xin}. Throughout, $\mathbb{Z}$, $\mathbb{N}$, and $\mathbb{P}$ denote the set of integers, nonnegative integers, and positive integers, respectively.

A fundamental result of Brauer and Shockley is central to our approach:
\begin{thm}[\cite{J. E. Shockley}]
Let $A: = (a, B) = (a, b_1, \ldots, b_k)$ with $\gcd(A) = 1$, $d \in \mathbb{P}$, and $\gcd(a,d) = 1$. Define the set $\mathcal{R} = \left\{ ax + \sum_{i=1}^k b_i x_i \mid x, x_i \in \mathbb{N} \right\}$ and let $N_r := \min \{ a_0 \in \mathcal{R} \mid a_0 \equiv r \pmod{a} \}$. Then
$$g(A) = \max_{r \, \in \, \{0, \ldots, a-1\}} N_r - a = \max_{r \, \in \, \{0, \ldots, a-1\}} N_{dr} - a.$$
\end{thm}

For sequences of the form $A = (a, ha + dB) = (a, ha + d b_1, \ldots, ha + d b_k)$, computing $N_{dr}$ reduces to the minimization problem:
$$O_B(M) := \min \left\{ \sum_{i=1}^k x_i \,\middle|\, \sum_{i=1}^k b_i x_i = M,  x_i \in \mathbb{N} \right\}.$$
\begin{lem}[\cite{Liu-Xin}]\label{0202}
Let $A = (a, ha + d b_1, \ldots, ha + d b_k)$ with $k, h, d, b_i \in \mathbb{P}$ and $\gcd(A) = 1$. For $0 \leq r \leq a-1$,
\begin{equation}\label{0203}
N_{dr} = \min_{m \in \mathbb{N}} N_{dr}(m), \quad \text{where} \quad N_{dr}(m) := O_B(ma + r) \cdot ha + (ma + r)d.
\end{equation}
\end{lem}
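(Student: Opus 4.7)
The plan is to unwind the definition of $N_{dr}$ from the Brauer--Shockley theorem and expose a two-level minimization that matches (\ref{0203}). By definition, $N_{dr} = \min\{a_0 : a_0 \equiv dr \pmod a,\ a_0 \in \mathcal{R}\}$, and a generic element of $\mathcal{R}$ can be written as
$$a_0 = ax + \sum_{i=1}^k (ha+db_i)\, x_i = a(x + hs) + dM,$$
where I introduce the shorthands $s := \sum_i x_i$ and $M := \sum_i b_i x_i$ for $x, x_i \in \mathbb{N}$.

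First I would show the congruence $a_0 \equiv dr \pmod a$ collapses to $M \equiv r \pmod a$. The hypothesis $\gcd(A)=1$ implies $\gcd(a, db_1,\ldots,db_k) = 1$, and in particular $\gcd(a,d) = 1$; cancelling $d$ in $dM \equiv dr \pmod a$ then gives $M \equiv r \pmod a$. Consequently, the admissible values of $M$ are exactly $M = ma+r$ with $m \in \mathbb{N}$, and the admissible elements of $\mathcal{R}$ in the class of $dr$ are those of the form $ax + has + d(ma+r)$ with $x \in \mathbb{N}$ and $(x_i) \in \mathbb{N}^k$ satisfying $\sum_i b_i x_i = ma+r$.

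Next I would minimize this expression in two tiers. For fixed $m$ and fixed representation $(x_i)$, the quantity $ax + has + d(ma+r)$ is strictly increasing in both $x$ and $s$, so the inner optimum takes $x = 0$ and selects a representation minimizing $s = \sum_i x_i$ subject to $\sum_i b_i x_i = ma+r$; by definition this minimum equals $O_B(ma+r)$. This yields $N_{dr}(m) = O_B(ma+r)\cdot ha + (ma+r)d$, and taking the outer minimum over $m \in \mathbb{N}$ gives (\ref{0203}).

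The main subtlety is ensuring $O_B(ma+r) < \infty$ for at least one $m$, so the outer minimum is attained. This is guaranteed because $\gcd(A)=1$ also forces $\gcd(a, b_1,\ldots,b_k)=1$, hence every residue class mod $a$ is eventually representable as a nonnegative integer combination of the $b_i$; this furnishes some $m$ with $O_B(ma+r)$ finite, completing the argument.
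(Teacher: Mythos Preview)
Your proof is correct. The paper does not actually prove Lemma~\ref{0202} itself but cites it from \cite{Liu-Xin}; however, the paper does prove the special case $h=d=1$, $b_i=i^2$ in Lemma~\ref{0506}, and the argument there is exactly the definitional unwinding you carry out (with the free $ax$ term silently set to zero). Your treatment is slightly more careful in that you explicitly justify $\gcd(a,d)=1$, dispose of the slack variable $x$, and address feasibility of $O_B(ma+r)$ for some $m$, but the underlying idea is the same two-tier minimization.
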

This lemma extends to infinite sequences (see Lemma \ref{0506}).

The Four-Square Theorem becomes pivotal when considering the infinite shifted square sequence $A' = (a, a + B') = (a, a+1^2, a+2^2, \ldots)$. Here, the optimization problem $O_{B'}(M)$ admits an explicit solution via the Four-Square Theorem and related results, enabling us to determine $g(A')$. Similarly, we determine $g(A')$ for the shifted prime sequence $A' = (a, a+1, a+p_1, a+p_2, \ldots)$, where $(p_i)_{i\geq 1}$ denotes the prime sequence $(2, 3, 5, 7, \ldots)$.

The finite sequence $A = (a, a+1, a+2^2, \ldots, a+k^2)$ proves more challenging, as solving $O_B(M)$ directly is difficult. We overcome this by deriving bounds via the Four-Square Theorem, establishing stability properties using generating functions, and ultimately proving the conjecture on $g(A)$.

The paper is structured as follows. Sections 2 and 3 address the Frobenius number for the infinite sequences $A' = (a, a+1^2, a+2^2, \ldots)$ and $A' = (a, a+1, a+p_1, a+p_2, \ldots)$, respectively. For each, the associated problem $O_{B'}(M)$ is solved explicitly using number theory, yielding $g(A')$. Section 4 resolves the conjecture of Einstein et al. for the sequence $(a, a+1, a+2^2, \ldots, a+k^2)$. Concluding remarks are given in Section 5.

\section{Frobenius Number for Infinite Shifted Square Sequence}
The Frobenius number for sequences of shifted squares remains an open problem.
\begin{prob}{\em\cite[Problem 4]{D. Einstein}}\label{hahaha3}
    Given $k \in \mathbb{P}$ and $a > 2$, consider the sequence $A = (a, a+1, a+4, \ldots, a+k^2)$. How can we characterize $g(A)$?
\end{prob}

Einstein et al. \cite{D. Einstein} conjectured that for such sequences, the Frobenius number takes the form $\frac{1}{k^2}(a^2 + ca) - v$ for some integers $c$ and $v$, where these integers depend on $k$ and the residue class of $a$ modulo $k^2$. Through geometric methods, they verified this formula for $k \in \{1, 2, 3, 4, 5, 6, 7\}$ with corresponding $a \geq 1, 1, 16, 24, 41, 67, 136$ respectively.

In this section, we first address a simpler variant of this problem before examining the open problem in Section \ref{hahaha24}. Our primary focus is on the Frobenius number $g(A)$ for the infinite sequence obtained when $k \to \infty$, that is, $A = (a, a+1^2, a+2^2, a+3^2, \ldots)$. While sufficiently large $k$ satisfying $a+k^2 \ge g(a, a+1)$ become redundant in the finite case, their inclusion in the infinite setting simplifies our analysis.

To determine $g(A)$, we require the following concept.
\begin{dfn}\label{0504}
For $n \in \mathbb{P}$, define $\iota(n)$ as
\[\iota(n) = \min \left\{ s \mid n = a_1^2 + a_2^2 + \cdots + a_s^2,  a_i \in \mathbb{P}, 1 \leq i \leq s \right\}.\]
Thus, $\iota(n)$ denotes the minimal number of positive integer squares whose sum equals $n$.
\end{dfn}
Classical results in number theory characterize $\iota(n)$. Lagrange's Four-Square Theorem provides a fundamental bound:
\begin{lem}[Lagrange's Four-Square Theorem \cite{G. H. Hardy}]\label{0502}
Every positive integer can be expressed as the sum of four squares.
\end{lem}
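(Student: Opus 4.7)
The plan is to prove Lagrange's theorem by the classical route: combine Euler's four-square identity with an infinite descent argument. First I would invoke Euler's identity expressing
\[
(x_1^2+x_2^2+x_3^2+x_4^2)(y_1^2+y_2^2+y_3^2+y_4^2) = z_1^2+z_2^2+z_3^2+z_4^2
\]
for explicit bilinear forms $z_i$ in the $x_j, y_j$. Together with the trivial representations $1 = 1^2+0^2+0^2+0^2$ and $2 = 1^2+1^2+0^2+0^2$, this identity shows the set of sums of four squares is closed under multiplication and contains $1$ and $2$, so it suffices to prove that every odd prime $p$ is a sum of four squares.

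For a fixed odd prime $p$, the next step is a pigeonhole argument producing some multiple $mp$ with $1\le m<p$ that is a sum of four squares. The two sets $\{x^2 \bmod p : 0\le x\le (p-1)/2\}$ and $\{-1-y^2 \bmod p : 0\le y\le (p-1)/2\}$ each contain $(p+1)/2$ residues, hence intersect modulo $p$. This yields $x,y$ with $x^2+y^2+1\equiv 0\pmod p$, and since $x,y\le (p-1)/2$, the value $x^2+y^2+1^2+0^2$ is a positive multiple of $p$ strictly less than $p^2$.

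The heart of the proof is the descent step, which I expect to be the main obstacle. Given $mp = x_1^2+x_2^2+x_3^2+x_4^2$ with $1<m$, I would choose $y_i\equiv x_i\pmod m$ with $|y_i|\le m/2$, so that $y_1^2+\cdots+y_4^2 = mn$ for some $0\le n<m$. Applying Euler's identity to $(mp)(mn) = m^2(np)$, one checks that each $z_i$ is divisible by $m$; dividing through by $m^2$ produces a representation of $np$ as a sum of four squares with $n<m$. The delicate points are ruling out $n=0$ (which forces $m\mid x_i$ for all $i$, hence $m^2\mid mp$, so $m\mid p$, contradicting $1<m<p$) and handling the configuration where $m$ is even and every $|y_i|=m/2$, where one replaces the representation by a suitable averaged one before descending. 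Iterating the descent forces $m$ down to $1$, giving the desired four-square representation of $p$ and completing the proof.
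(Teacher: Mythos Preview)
Your outline is the classical Euler-identity-plus-descent proof and is essentially correct; the only wrinkle is that as written you assert $0\le n<m$ before you have excluded the boundary case $n=m$ (all $|y_i|=m/2$, $m$ even), but you immediately flag exactly that configuration and the standard parity-pairing/halving trick that disposes of it, so the plan goes through.

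As for comparison with the paper: there is nothing to compare. The paper does not prove this lemma at all; it is stated as a quoted classical result with a citation to Hardy--Wright, and the authors use it as a black box (to bound $\iota(n)\le 4$ and later $\iota_k(M)\le \lfloor M/k^2\rfloor+4$). Your argument is precisely the textbook proof one finds in the cited reference, so in that sense you have supplied what the paper deliberately omits rather than offered an alternative route.
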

To determine $\iota(n)$ precisely, we recall the standard prime factorization of $n \in \mathbb{P}$: $n = p_1^{e_1} p_2^{e_2} \cdots p_k^{e_k}$, where $p_i$ are distinct primes greater than 1 and $e_i > 0$ for $1 \leq i \leq k$.

Two additional lemmas refine this characterization:
\begin{lem}[\cite{G. H. Hardy}]\label{0501}
A positive integer $n$ is expressible as a sum of two squares if and only if, in its standard prime factorization, all prime factors congruent to $3 \pmod{4}$ have even exponents.
\end{lem}

\begin{lem}[\cite{G. H. Hardy}]\label{0503}
A positive integer $n$ is expressible as a sum of three squares if and only if $n$ cannot be written as $4^r(8t+7)$ for any $r, t \in \mathbb{N}$.
\end{lem}

Combining these results yields a complete classification:
\begin{thm}\label{0505}
Let $n \in \mathbb{P}$ with standard prime factorization $n = p_1^{e_1} p_2^{e_2} \cdots p_k^{e_k}$. Then $n$ belongs to exactly one of the following mutually exclusive categories:
\begin{enumerate}
\item[1)] All $e_i$ are even ($n$ is a perfect square).
\item[2)] At least one $e_i$ is odd, and for every prime $p_i \equiv 3 \pmod{4}$, the exponent $e_i$ is even.
\item[3)] $n = 4^r(8t+7)$ for some $r, t \in \mathbb{N}$.
\item[4)] None of the above conditions hold.
\end{enumerate}

Accordingly, $\iota(n)$ is determined by:
\[
\iota(n) =
\begin{cases}
1 & \text{if $n$ is of type 1)}, \\
2 & \text{if $n$ is of type 2)}, \\
4 & \text{if $n$ is of type 3)}, \\
3 & \text{if $n$ is of type 4)}.
\end{cases}
\]
\end{thm}
\begin{proof}
The result follows directly from Lemmas \ref{0502}, \ref{0501}, and \ref{0503}.
\end{proof}

To characterize \( g(A) \), we require the following lemma.
\begin{lem}\label{0506}
Let \( A = (a, a+1^2, a+2^2, a+3^2, \ldots) \). For \( 1 \leq r \leq a-1 \),
\[
N_r = \min \left\{ \iota(ma+r) \cdot a + ma + r \mid m \in \mathbb{N} \right\}.
\]
\end{lem}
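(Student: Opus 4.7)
The plan is to reduce Lemma \ref{0506} directly to Lemma \ref{0202}, specialized with $h=d=1$ and $b_i=i^2$, and then to identify the optimization quantity $O_{B'}(ma+r)$ with $\iota(ma+r)$. The only non-cosmetic issue is that Lemma \ref{0202} is stated for finite $k$, while here we work with the infinite sequence $B'=(1^2,2^2,3^2,\ldots)$, so I need to explain why the argument still goes through.

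First I would unfold the Brauer--Shockley setup. Any element of $\mathcal R$ has the form
\[
xa+\sum_{i\ge 1}x_i(a+i^2)=\Bigl(x+\sum_{i\ge 1}x_i\Bigr)a+\sum_{i\ge 1}i^2 x_i,
\]
with $x,x_i\in\mathbb N$ and only finitely many $x_i$ nonzero. Such an element is $\equiv r\pmod a$ iff $\sum_{i\ge 1}i^2 x_i\equiv r\pmod a$, equivalently $\sum_{i\ge 1}i^2 x_i=ma+r$ for some $m\in\mathbb N$; note $m\ge 0$ because $r\ge 1$ forces the left side to be positive. For each fixed $m$, minimising the representative amounts to choosing $x=0$ and minimising $\sum_{i\ge 1}x_i$ subject to $\sum_{i\ge 1}i^2x_i=ma+r$. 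This minimum is by definition $O_{B'}(ma+r)$, and the resulting value of the representative is $O_{B'}(ma+r)\cdot a+(ma+r)$. Taking the minimum over $m\in\mathbb N$ gives the desired expression.

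Next I would dispatch the infinite-$k$ concern. For a fixed target $M=ma+r$, only squares $i^2\le M$ can appear with nonzero $x_i$, so the optimization $O_{B'}(M)$ is actually over the finite set $\{1^2,2^2,\ldots,\lfloor\sqrt M\rfloor^2\}$; thus Lemma \ref{0202} applies verbatim with the finite truncation $B'_M=(1^2,\ldots,\lfloor\sqrt M\rfloor^2)$ at each $m$, and the passage to the infinite sequence is harmless. Moreover, Lagrange's Four-Square Theorem (Lemma \ref{0502}) guarantees $O_{B'}(M)\le 4$, so the minimum in \eqref{0203} is attained.

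Finally I would identify $O_{B'}(M)=\iota(M)$. Unpacking the definitions, $O_{B'}(M)$ is the minimum number of positive perfect squares (with repetition) whose sum is $M$: each unit of $x_i$ contributes one summand $i^2$ to the total count $\sum x_i$, and conversely any decomposition $M=a_1^2+\cdots+a_s^2$ with $a_j\in\mathbb P$ yields a feasible point with $\sum x_i=s$ by letting $x_i$ count how many $a_j$ equal $i$. Hence $O_{B'}(M)=\iota(M)$, and substitution into the previous display yields the claimed formula. There is no real obstacle; the only point that merits explicit mention is the passage from finite to infinite $k$, which is why the authors flagged Lemma \ref{0506} as the instance justifying that extension.
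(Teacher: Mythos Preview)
Your proposal is correct and follows essentially the same approach as the paper's proof: unfold the definition of $N_r$, rewrite each representative as $(\sum x_i)\,a + (ma+r)$, and identify the inner minimum $\min\sum x_i$ with $\iota(ma+r)$. The paper's version is terser---it omits the free $xa$ term and only remarks parenthetically that finitely many $x_i$ are nonzero---whereas you spell out explicitly why the infinite sequence causes no trouble and why $O_{B'}(M)=\iota(M)$; but the substance is the same.
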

\begin{proof}
We extend the definition of \( N_r \) appropriately:
\begin{align*}
N_r &= \min \left\{ \sum_{i=1}^{\infty} x_i (a + i^2) \mid \sum_{i=1}^{\infty} x_i (a + i^2) \equiv r \pmod{a},\ x_i \in \mathbb{N},\ i \geq 1 \right\} \\
&= \min \left\{ \left( \sum_{i=1}^{\infty} x_i \right) \cdot a + ma + r \mid \sum_{i=1}^{\infty} x_i \cdot i^2 = ma + r,\ m, x_i \in \mathbb{N},\ i \geq 1 \right\} \\
&= \min \left\{ \iota(ma + r) \cdot a + ma + r \mid m \in \mathbb{N} \right\}.
\end{align*}
(Note: Only finitely many \( x_i \) are non-zero.)
\end{proof}

\begin{thm}\label{0507}
Let \( A = (a, a+1^2, a+2^2, a+3^2, \ldots) \). Suppose there exists \( r \) with \( 1 \leq r \leq a-1 \) satisfying \( \iota(r) = 4 \), \( \iota(a+r) \geq 3 \), and \( \iota(2a+r) \geq 2 \). Then
\[
g(A) = 3a + \max \left\{ r \mid \iota(r) = 4,\  \iota(a+r) \geq 3,\  \iota(2a+r) \geq 2 \right\}.
\]
\end{thm}
\begin{proof}
The result follows directly from Lemma \ref{0506} and the definition \( g(A) = \max\{N_r\} - a \).
\end{proof}

When Theorem \ref{0507} is inapplicable, we obtain the following characterization.
\begin{thm}\label{0508}
Let \( A = (a, a+1^2, a+2^2, a+3^2, \ldots) \). For \( 1 \leq r \leq a-1 \), suppose the hypothesis of Theorem \ref{0507} fails, but at least one
of the following conditions holds:
\begin{enumerate}
    \item[(1)] \( \iota(r) = 4 \) and \( \iota(a+r) = 2 \);
    \item[(2)] \( \iota(r) = 4 \), \( \iota(a+r) \geq 3 \), and \( \iota(2a+r) = 1 \);
    \item[(3)] \( \iota(r) = 3 \) and \( \iota(a+r) \geq 2 \).
\end{enumerate}
Then
\[
g(A) = 2a + \max \left\{ r \mid r \text{ satisfies \emph{one} of conditions (1)--(3)} \right\}.
\]
\end{thm}
Table \ref{tab-B} in the appendix provides illustrative examples for Theorem \ref{0508}.

\begin{cnj}\label{0509}
Let \( A = (a, a+1^2, a+2^2, a+3^2, \ldots) \). For \( a > 30 \), Theorem \ref{0507} invariably applies, yielding
\[
g(A) = 3a + \max \left\{ r \mid \iota(r) = 4,\  \iota(a+r) \geq 3,\  \iota(2a+r) \geq 2 \right\}.
\]
\end{cnj}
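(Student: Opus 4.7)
I would begin by reducing the conjecture to a purely number-theoretic existence statement. Using Lemma \ref{0506}, one checks that $N_r = 4a+r$ exactly when $r$ lies in
$$S_a := \{r : 1 \le r \le a-1,\ \iota(r)=4,\ \iota(a+r) \ge 3,\ \iota(2a+r) \ge 2\},$$
while $N_r \le 3a+r$ for every $r \notin S_a$, because the failure of any one of the three conditions provides an index $m \in \{0,1,2\}$ with $\iota(ma+r)+m \le 3$. Since $\max_{r \in S_a}(N_r-a) = 3a+\max S_a \ge 3a+1$ strictly dominates $\max_{r \notin S_a}(N_r-a) \le 2a + (a-1) = 3a-1$, the formula of Theorem \ref{0507} holds whenever $S_a$ is nonempty. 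The conjecture therefore reduces to showing $S_a \ne \emptyset$ for every $a > 30$.

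To construct an element of $S_a$, I unpack the three conditions via Theorem \ref{0505}: (i) $\iota(r)=4$ means $r = 4^s(8t+7)$ for some $s,t \in \N$; (ii) $\iota(a+r) \ge 3$ holds as soon as some prime $p \equiv 3 \pmod 4$ divides $a+r$ to an odd power (by Lemma \ref{0501}); (iii) $\iota(2a+r) \ge 2$ just says $2a+r$ is not a perfect square. The plan is then to apply the Chinese Remainder Theorem with $p=3$: impose $r \equiv 7 \pmod 8$, which forces (i), and select $r \bmod 9$ so that $a+r \equiv 3$ or $6 \pmod 9$, which forces $3\,\|\,(a+r)$ and hence (ii). For any value of $a \bmod 9$ there are exactly two admissible residues of $r$ modulo $9$, so by CRT the joint conditions cut out two arithmetic progressions modulo $72$, yielding at least $2\lfloor (a-1)/72\rfloor$ candidates in $\{1,\ldots,a-1\}$. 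Only condition (iii) can fail on such a candidate, and it fails on at most $\lceil\sqrt{3a}\rceil - \lfloor\sqrt{2a}\rfloor = O(\sqrt{a})$ values of $r$. A pigeonhole comparison $2\lfloor(a-1)/72\rfloor > \lceil\sqrt{3a}\rceil - \lfloor\sqrt{2a}\rfloor$ then produces an explicit threshold $N_0$ beyond which $S_a \ne \emptyset$ is automatic.

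The remaining range $30 < a < N_0$ must be handled by direct computation: for each such $a$, enumerate the $r \le a-1$ of the form $4^s(8t+7)$ and check conditions (ii) and (iii) using the criteria in Theorem \ref{0505}. The hardest part is sharpening the threshold $N_0$ so that this finite check is genuinely short; the crude pigeonhole above gives $N_0$ in the low hundreds, which is tractable but not elegant. A natural refinement is to enrich the CRT input with additional primes $p \in \{7,11,19,\ldots\}$, each $\equiv 3 \pmod 4$, and with the scaled forms $s \ge 1$ in (i); this greatly increases the candidate density and pushes $N_0$ down, and heuristically suffices in view of Landau's $O(1/\sqrt{\log n})$ density bound for sums of two squares. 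A subsidiary subtlety is the dependence of the residue choice on $a \bmod 9$, but since there are uniformly two valid residues of $r$ mod $9$ for every $a$, the construction is uniform; the failure cases listed in Theorem \ref{0508} and tabulated in Table \ref{tab-B} are precisely the small exceptional $a$ (all $\le 30$), which lends strong a priori support to the conjectured threshold.
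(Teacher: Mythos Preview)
The paper does not prove this statement: it is labelled a Conjecture and left open, with only the numerical evidence of Table~\ref{tab-B} and the remark that a counterexample would require $\iota(a+n)\le 2$ or $\iota(2a+n)=1$ for \emph{every} $n=4^s(8t+7)<a$, which the authors merely ``guess \dots\ is impossible''. There is thus no proof in the paper to compare against.

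Your outline, by contrast, is a genuine proof strategy and it looks correct. The reduction to $S_a\ne\emptyset$ is exactly the hypothesis of Theorem~\ref{0507}, and your CRT construction (forcing $r\equiv 7\pmod 8$ so that $\iota(r)=4$, and $a+r\equiv 3$ or $6\pmod 9$ so that $3\,\|\,(a+r)$ and hence $\iota(a+r)\ge 3$) cleanly manufactures two residue classes modulo~$72$ of candidates; pigeonholing these against the $O(\sqrt a)$ perfect squares in $(2a,3a)$ yields an explicit threshold $N_0$. Structurally this mirrors the paper's own proof of Theorem~\ref{0607} for the prime sequence: a density argument for large $a$ followed by a finite computer check. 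The only caveat is that the argument is not complete until that finite verification for $30<a\le N_0$ is actually carried out and reported (your crude bound puts $N_0$ in the few hundreds, so this is routine but not optional). With that check in hand, you would be upgrading the paper's conjecture to a theorem.
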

Empirical evidence from Table \ref{tab-B} supports Conjecture \ref{0509}. Theorem \ref{0508} holds only for \( a \in \{4, 5, 7, 9, 10, 11, 13, 19, 21, 22, 30\} \). The minimal elements of the set \( \{4^r(8t+7) \mid r, t \in \mathbb{N}\} \) are \( 7, 15, 23, 28, 31, 39, 47, \ldots \). To disprove Conjecture \ref{0509}, one must find \( a > 30 \) such that for all \( n = 4^r(8t+7) < a \),
\(
\iota(a + n) \leq 2 \quad \text{or} \quad \iota(2a + n) = 1.
\)
We guess that it is impossible.

\section{Frobenius Number for Infinite Shifted Prime Sequence}
We now investigate the Frobenius number of the infinite shifted prime sequence. Denote the \( k \)-th prime by \( p_k \). The bound \( p_k \geq k \log k \) for \( k \geq 2 \) is classical \cite{J. B. Rosser,L. Schoenfeld}. Consider \( A = (a, a + p_0, a + p_1, \ldots, a + p_m, \ldots) \) where \( a \in \mathbb{P} \) (\( a > 2 \)), \( p_0 = 1 \), and \( p_1, p_2, \ldots \) is the prime sequence \( (2, 3, 5, 7, 11, \ldots) \). Our objective is to characterize \( g(A) \).

\begin{cnj}[\cite{P. Ribenboim}]\label{0601}
Every even integer greater than 2 equals the sum of two primes.
\end{cnj}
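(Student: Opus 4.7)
The statement above is the classical \emph{binary Goldbach conjecture}, first written down in a 1742 letter from Goldbach to Euler and still open today; so any ``proof plan'' I can offer is necessarily a description of the machinery a serious attempt would have to marshal, rather than a route I expect to complete.

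The standard analytic framework is the Hardy--Littlewood circle method. For an even integer $N>2$, one studies the weighted representation count
\[ R(N)=\sum_{p_1+p_2=N}(\log p_1)(\log p_2)=\int_0^1 S(\alpha)^2 e(-N\alpha)\,d\alpha,\qquad S(\alpha)=\sum_{p\le N}(\log p)\,e(p\alpha), \]
and splits $[0,1]$ into major arcs (small neighborhoods of rationals $a/q$ with $q$ bounded by a power of $\log N$) and minor arcs. On the major arcs, the Siegel--Walfisz theorem together with standard manipulations yields $R(N)\sim \mathfrak{S}(N)\,N$, where the singular series $\mathfrak{S}(N)$ is strictly positive for every even $N>2$; this alone would force $R(N)>0$ and settle the conjecture. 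The entire game is therefore to show that the minor-arc contribution is of strictly smaller order, which requires a pointwise bound $|S(\alpha)|=o(N/\log N)$ uniformly for $\alpha$ poorly approximable by rationals with small denominator, together with an $L^2$ control on $|S|^2$ that beats the trivial Parseval value $\asymp N\log N$. That Parseval estimate is precisely on the borderline and leaves no slack, which is where the argument runs aground.

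The fundamental obstruction is the \emph{parity problem} of Selberg: combinatorial sieves cannot distinguish integers with an even versus an odd number of prime factors, which blocks any purely sieve-theoretic attack on the binary problem. The best unconditional results in this direction remain Chen's theorem (every sufficiently large even integer is $p+P_2$, where $P_2$ has at most two prime factors) and the Vinogradov--Helfgott resolution of the ternary analogue. Breaking parity for binary Goldbach appears to require genuinely new ideas, so within the scope of the present paper the natural course is to \emph{assume} the statement above and derive the Frobenius formula for the prime sequence $A^\prime=(a,a+1,a+p_1,a+p_2,\ldots)$ conditionally on it, just as Lagrange's four-square theorem was used unconditionally in Section 2.
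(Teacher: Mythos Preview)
Your assessment is correct: the statement is the binary Goldbach conjecture, and the paper does not prove it either. It is stated in the paper as a \emph{Conjecture} (via the \texttt{cnj} environment), not as a theorem, and is explicitly attributed to the literature. The paper then proceeds exactly as you suggest in your final paragraph: Theorem~\ref{0604} begins with ``Assuming the validity of the strong Goldbach conjecture (i.e., Conjecture~\ref{0601})'' and derives the value of $\tau(n)$ conditionally, which in turn feeds into the Frobenius formula for the prime sequence in Theorems~\ref{0605} and~\ref{0606}. So there is no ``paper's own proof'' to compare against; your recognition that the statement is open and must be assumed matches the paper's treatment precisely.

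Your exposition of the circle method and the parity obstruction is accurate and informative, but strictly speaking it goes beyond what the paper needs or claims. The paper simply cites the conjecture and moves on; it does not discuss why the conjecture is hard. Your closing recommendation---assume the statement and proceed conditionally, by analogy with the unconditional use of Lagrange's theorem in Section~2---is exactly what the authors do.
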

This is the \emph{strong Goldbach conjecture}. A related result, the \emph{weak Goldbach conjecture}, has been resolved by Helfgott.

\begin{prop}[\cite{H. A. Helfgott}]\label{0602}
Every odd integer greater than 7 equals the sum of three odd primes.
\end{prop}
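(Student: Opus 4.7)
The plan is to follow the Hardy--Littlewood circle method with fully effective constants, as carried out by Helfgott. Define the weighted exponential sum $S(\alpha) := \sum_{p \le N} (\log p)\, e^{2\pi i p \alpha}$ over primes, so that $r(N) := \int_0^1 S(\alpha)^3\, e^{-2\pi i N \alpha}\, d\alpha$ counts, up to logarithmic weights and lower-order diagonal contributions, the number of ordered triples of primes summing to $N$. The aim is to establish $r(N) > 0$ for every odd $N > 7$, which forces the existence of at least one such representation (and, for $N > 7$, all three primes will automatically be odd).

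First I would partition $[0,1)$ into \emph{major arcs} $\mathfrak{M}$, consisting of small neighborhoods of rationals $a/q$ with $q \le Q_0$ and $\gcd(a,q) = 1$, together with the complementary \emph{minor arcs} $\mathfrak{m} = [0,1) \setminus \mathfrak{M}$. On the major arcs, explicit forms of the Siegel--Walfisz theorem for Dirichlet $L$-functions yield an asymptotic
\[
\int_{\mathfrak{M}} S(\alpha)^3 e^{-2\pi i N\alpha}\, d\alpha = \tfrac{1}{2} N^2\, \mathfrak{S}(N) + O\bigl( N^2 (\log N)^{-A} \bigr),
\]
where the singular series $\mathfrak{S}(N)$ is uniformly bounded below by a positive constant for all odd $N$. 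On the minor arcs, a Vinogradov-type bound on $\sup_{\alpha \in \mathfrak{m}} |S(\alpha)|$, combined with Parseval's identity $\int_0^1 |S(\alpha)|^2\, d\alpha \ll N \log N$, gives
\[
\Bigl| \int_{\mathfrak{m}} S(\alpha)^3 e^{-2\pi i N\alpha}\, d\alpha \Bigr| \le \Bigl( \sup_{\alpha \in \mathfrak{m}} |S(\alpha)| \Bigr) \int_0^1 |S(\alpha)|^2\, d\alpha,
\]
and the parameters $Q_0$ and $N$ are tuned so that the minor-arc contribution is strictly dominated by the major-arc main term for every $N$ above some explicit threshold $N_0$.

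The remaining range $7 < N \le N_0$ I would then dispatch directly by computer: for each such odd $N$, iterate over small odd primes $p$ and test whether $N - p$ is a sum of two odd primes, using a precomputed prime table and the standard meet-in-the-middle check. The main obstacle will be producing constants in the two arc estimates that are sharp enough to make $N_0$ small enough for the finite verification to be feasible. This calibration is precisely Helfgott's contribution: a refined minor-arc bound via Vaughan-type identities and large-sieve inequalities, together with a finite computational verification of GRH for the low-lying zeros of many Dirichlet $L$-functions (which circumvents the ineffectivity of Siegel's theorem), pushes $N_0$ down to roughly $10^{30}$, at which point the remaining odd integers can be handled by an efficient enumeration.
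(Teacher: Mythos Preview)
The paper does not prove this proposition at all: it is stated with a citation to Helfgott and used purely as a black box in the proof of Theorem~\ref{0604}. So there is no ``paper's own proof'' to compare against; your sketch of the Hardy--Littlewood circle method with Helfgott's effective constants is essentially a summary of the argument behind the citation, and goes far beyond anything the paper attempts.

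One small technical slip in your outline: the claim that ``for $N>7$, all three primes will automatically be odd'' once $r(N)>0$ is not correct as written. The integral $r(N)$ counts weighted ordered triples of \emph{all} primes summing to $N$, including those in which two of the primes equal $2$ (e.g.\ $9=2+2+5$). What is true is that the contribution of such triples is $O(N\log N)$ (at most one free prime variable), which is negligible against the main term $\tfrac12 N^2\,\mathfrak{S}(N)$; alternatively one restricts $S(\alpha)$ to odd primes from the outset. Either fix is routine, but the step as you phrased it does not stand on its own.
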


\begin{dfn}\label{0603}
Let \( \mathcal{PN} \) denote the set of all prime numbers, i.e., \( \mathcal{PN} = \{2, 3, 5, 7, 11, \ldots\} \). For any \( n \in \mathbb{P} \), we define the symbol \( \tau(n) \) as
\[
\tau(n) = \min\left\{ s \,\bigg|\, n = p_1 + p_2 + \cdots + p_s, \ p_i \in \mathcal{PN} \cup \{1\}, \ 1 \leq i \leq s \right\}.
\]
It is evident that \( \tau(n) \leq 3 \).
\end{dfn}

\begin{thm}\label{0604}
Assuming the validity of the strong Goldbach conjecture (i.e., Conjecture~\ref{0601}), for any \( n \in \mathbb{P} \), we have
\[
\tau(n) =
\begin{cases}
1 & \text{if } n \in \mathcal{PN} \cup \{1\}, \\
3 & \text{if } n \text{ is odd, and } n, n-2 \notin \mathcal{PN} \cup \{1\}, \\
2 & \text{otherwise}.
\end{cases}
\]
\end{thm}
\begin{proof}
Under the assumption that Conjecture~\ref{0601} holds, this theorem follows directly from Proposition~\ref{0602} and Definition~\ref{0603}.
\end{proof}

For the sequence \( A = (a, a + p_0, a + p_1, \ldots, a + p_m, \ldots) \), we have

\begin{align*}
N_r&=\min \left\{ \sum_{i=0}^{\infty}x_i(a+p_i)\mid \sum_{i=0}^{\infty}x_i(a+p_i)\equiv r\pmod{a}, \ x_i\in \mathbb{N}, i\geq 0\right\}
\\&=\min \left\{ \left(\sum_{i=0}^{\infty}x_i\right)\cdot a+ma+r\mid \sum_{i=0}^{\infty}x_i\cdot p_i=ma+r, \ m, x_i\in \mathbb{N}, i\geq 0\right\}
\\&=\min \{\tau(ma+r)\cdot a+ma+r\mid m\in \mathbb{N}\}.
\end{align*}
Since only finitely many \( x_i \) are non-zero, we have \( g(A) = \max\{N_r\} - a \). By analogy with the shifted square sequence, we establish the following theorem.

\begin{thm}\label{0605}
Let \( A = (a, a + p_0, a + p_1, \ldots, a + p_m, \ldots) \), where \( a \in \mathbb{P} \), \( a > 2 \), \( p_0 = 1 \), and \( p_1, \ldots, p_m, \ldots \) form the sequence of primes. For \( 1 \leq r \leq a - 1 \), if there exists an \( r \) such that \( \tau(r) = 3 \) and \( \tau(a + r) \geq 2 \), then
\[
g(A) = 2a + \max\{ r \mid \tau(r) = 3, \ \tau(a + r) \geq 2 \}.
\]
\end{thm}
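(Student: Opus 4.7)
The plan is to prove this by direct analogy with Theorem \ref{0507}, using the formula
$$N_r = \min\{\tau(ma+r) \cdot a + ma + r \mid m \in \mathbb{N}\}$$
derived in the display immediately preceding the theorem, together with the universal bound $\tau(n) \leq 3$ furnished by Theorem \ref{0604}. The role played by $\iota$ (with upper bound $4$) in the square case is here played by $\tau$ (with upper bound $3$), so the answer is pushed down from $3a + r^*$ to $2a + r^*$.

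First I would rewrite each summand as $(\tau(ma+r) + m) \cdot a + r$ and note that, since $\tau \geq 1$, for any $m \geq 3$ this quantity is at least $4a + r$, which is already larger than the $m=0$ term $\tau(r) a + r \leq 3a + r$. Hence the minimum defining $N_r$ is always attained for some $m \in \{0, 1, 2\}$, and in particular $N_r \leq 3a + r$ for every $r$.

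Next I would characterize when equality $N_r = 3a + r$ holds. This happens precisely when each of the values at $m = 0, 1, 2$ is at least $3a + r$, that is, when $\tau(r) = 3$, $\tau(a+r) \geq 2$, and $\tau(2a+r) \geq 1$; the last inequality is automatic. Conversely, if $\tau(r) \leq 2$, or if $\tau(r) = 3$ but $\tau(a+r) = 1$, then taking $m = 0$ or $m = 1$ already gives $N_r \leq 2a + r \leq 3a - 1$.

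Finally, set $r^* := \max\{r : \tau(r) = 3, \tau(a+r) \geq 2\}$. By the characterization, $N_{r^*} = 3a + r^*$; every $r$ not satisfying the condition has $N_r \leq 3a - 1 < 3a + r^*$; and every $r$ satisfying the condition with $r < r^*$ has $N_r = 3a + r < 3a + r^*$. Therefore $\max_{r} N_r = 3a + r^*$, so by the Brauer--Shockley formula $g(A) = \max_r N_r - a = 2a + r^*$, as claimed. There is no real obstacle beyond Theorem \ref{0604}: the case analysis is structurally identical to that of Theorem \ref{0507}, with $3$ in place of $4$, and the sole caveat is that the conclusion inherits the conditional dependence on the strong Goldbach conjecture used to guarantee $\tau \leq 3$.
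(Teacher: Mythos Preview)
Your proof is correct and follows essentially the same approach as the paper, which simply says ``as in the case of the square sequence'' and defers to the trivial proof of Theorem~\ref{0507}; you have spelled out the parallel argument in full detail. One minor remark: the bound $\tau(n)\le 3$ is asserted already in Definition~\ref{0603} (immediately after the weak Goldbach result) rather than in Theorem~\ref{0604}, so the paper appears to treat it as unconditional---your closing caveat about dependence on strong Goldbach is therefore slightly stronger than what the paper itself acknowledges, though arguably warranted.
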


Similarly, we obtain the following result:

\begin{thm}\label{0606}
Let \( A = (a, a + p_0, a + p_1, \ldots, a + p_m, \ldots) \), where \( a \in \mathbb{P} \), \( a > 2 \), \( p_0 = 1 \), and \( p_1, \ldots, p_m, \ldots \) form the sequence of primes. For \( 1 \leq r \leq a - 1 \), if the condition in Theorem~\ref{0605} is not satisfied and at least one of the following holds:

1) There exists an \( r \) such that \( \tau(r) = 3 \) and \( \tau(a + r) = 1 \);

2) There exists an \( r \) such that \( \tau(r) = 2 \).

Then,
\[
g(A) = a + \max\{ r \mid \text{the } r \text{ in conditions 1) or 2)} \}.
\]
\end{thm}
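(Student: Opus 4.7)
The plan is to mirror the proof of Theorem~\ref{0508} in the prime setting, with $\iota$ replaced by $\tau$. Starting from the reduction
\[ N_r \;=\; \min\{\tau(ma+r)\cdot a + ma + r \mid m\in\mathbb{N}\} \]
derived just before Theorem~\ref{0605}, the key structural input is the uniform bound $\tau(n)\le 3$ recorded in Definition~\ref{0603}, which rests on Proposition~\ref{0602} (together with Conjecture~\ref{0601} where applicable). This bound immediately shows that every summand with $m\ge 2$ satisfies $\tau(ma+r)\cdot a + ma + r \ge a + 2a + r = 3a+r$, so the minimum over $m$ is always realized at $m=0$ or $m=1$. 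Thus I only need to compare $\tau(r)\cdot a + r$ with $(\tau(a+r)+1)\cdot a + r$.

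Next comes a short case split on $\tau(r)\in\{1,2,3\}$, using the hypothesis that Theorem~\ref{0605} does not apply, which is precisely the implication: whenever $\tau(r)=3$, one has $\tau(a+r)=1$. Case (i), $\tau(r)=1$: the $m=0$ term $a+r$ beats every $m\ge 1$ term (which is at least $2a+r$), so $N_r=a+r$. Case (ii), $\tau(r)=2$: the $m=0$ term $2a+r$ is no larger than the $m=1$ term $(\tau(a+r)+1)a+r\ge 2a+r$, and the $m\ge 2$ terms are at least $3a+r$, so $N_r=2a+r$. Case (iii), $\tau(r)=3$: the failure of Theorem~\ref{0605} forces $\tau(a+r)=1$, so the $m=1$ term equals $2a+r$ and beats the $m=0$ term $3a+r$, giving $N_r=2a+r$. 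Cases (ii) and (iii) are precisely conditions 2) and 1) of the theorem, while case (i) yields the strictly smaller $N_r=a+r$.

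Finally I invoke the Brauer--Shockley formula $g(A)=\max_r N_r - a$ of Theorem~1.1. Since $a>2$, any value $N_r=a+r$ from case (i) is at most $2a-1$, whereas any $N_r=2a+r$ from case (ii) or (iii) is at least $2a+1$; hence, provided at least one $r$ meets condition 1) or 2) as hypothesized, the maximum of $N_r$ is attained on such $r$ and equals $2a+\max\{r\mid\text{condition 1) or 2)}\}$, which after subtracting $a$ gives the claimed formula. The only genuine obstacle is entirely bookkeeping: verifying that the three cases exhaust all possibilities once Theorem~\ref{0605} fails, and confirming that the crude lower bound for $m\ge 2$ is strong enough to rule those terms out. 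Every other ingredient — the reduction to $\tau$, the sharp bound $\tau\le 3$, and the Brauer--Shockley maximum — is already in hand, so the argument is essentially parallel to that of Theorem~\ref{0605}.
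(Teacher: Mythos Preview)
Your argument is correct and is exactly the elaboration the paper leaves to the reader: the paper offers no proof beyond the phrase ``Similarly, it follows readily that'' preceding the statement, indicating that the reasoning parallels Theorem~\ref{0605}. Your case split on $\tau(r)\in\{1,2,3\}$, the use of $\tau\le 3$ to discard $m\ge 2$, and the observation that the failure of Theorem~\ref{0605} forces $\tau(a+r)=1$ whenever $\tau(r)=3$ together reproduce precisely that parallel.
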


To better understand the above theorem, we refer to Table~\ref{tab-D} in the appendix, which suggests the following result.

\begin{thm}\label{0607}
Let \( A = (a, a + p_0, a + p_1, \ldots, a + p_m, \ldots) \), where \( a \in \mathbb{P} \), \( a > 2 \), \( p_0 = 1 \), and \( p_1, \ldots, p_m, \ldots \) form the sequence of primes. If \( a > 44 \), then the formula \( g(A) \) in Theorem~\ref{0605} is the Frobenius number for the sequence \( A \), i.e., for any \( 1 \leq r \leq a - 1 \),
\[
g(A) = 2a + \max\{ r \mid \tau(r) = 3, \ \tau(a + r) \geq 2 \}.
\]
\end{thm}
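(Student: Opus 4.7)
The plan is to reduce Theorem~\ref{0607} to a purely existential statement and then verify that statement by a short case analysis. Define
$$S(a) := \{\, r \in \{1, \ldots, a-1\} : \tau(r) = 3 \text{ and } \tau(a+r) \geq 2 \,\}.$$
Using the formula $N_r = \min_{m \geq 0}[\tau(ma+r)\cdot a + ma + r]$ together with the bound $\tau \leq 3$, one checks that $N_r = 3a + r$ precisely when $r \in S(a)$: the $m=0$ term equals $3a+r$ iff $\tau(r)=3$, the $m=1$ term beats it unless $\tau(a+r)\geq 2$, and the $m\geq 2$ terms are automatically $\geq a+2a+r = 3a+r$. For $r\notin S(a)$ the same expansion forces $N_r \leq 2a+r \leq 3a-1$. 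Since the smallest integer with $\tau=3$ is $27$, once $S(a)\neq\emptyset$ we get $\max_r N_r = 3a + \max S(a)$, and hence $g(A)=2a+\max S(a)$, which is exactly the formula of Theorem~\ref{0605}. The theorem therefore reduces to the single claim: for every $a>44$, $S(a)\neq\emptyset$.

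For $a$ odd, I would take $r=27$: by Theorem~\ref{0604} we have $\tau(27)=3$ (since $27$ and $25$ are both composite), and $a+27$ is even and exceeds $2$, so $\tau(a+27)\geq 2$.

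For $a$ even I would exploit divisibility by $3$, choosing $r$ in a distinguished residue class so that $3 \mid a+r$: take $r=27$ if $a\equiv 0 \pmod 3$, $r=35$ if $a\equiv 1 \pmod 3$, and $r=121$ if $a\equiv 2 \pmod 3$. In each case $r$ is odd with $\tau(r)=3$ (verifying $35=5\cdot 7$, $33=3\cdot 11$ and $121=11^2$, $119=7\cdot 17$ via Theorem~\ref{0604}), and by construction $3\mid a+r$ with $a+r>3$, forcing $\tau(a+r)\geq 2$. This uniformly handles every even $a>44$ with $a>r$, i.e.\ all even $a>121$ together with all even $a\equiv 0,1\pmod 3$ in the range $44<a\leq 121$.

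The remaining obstacle, and the only truly non-uniform step, is the residual list of even $a$ with $a\equiv 2\pmod 3$ and $44<a\leq 121$: here the smallest admissible $r$ (namely $r=121$) is just too large, so the mod-$3$ trick fails. This leaves the twelve values $a\in\{50,56,62,68,74,80,86,92,98,104,110,116\}$, which I would dispatch by direct inspection (as already recorded in Table~\ref{tab-D}): for each, some small $r\in\{27,35,51,57\}$ produces $a+r$ with an obvious small factor---for instance $50+27=77$, $56+35=91$, $62+57=119$, $68+27=95$, $74+51=125$, and so on through the list. With those twelve checks the existence claim $S(a)\neq\emptyset$ holds for all $a>44$, and Theorem~\ref{0607} follows.
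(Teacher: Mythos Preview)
Your reduction to the existential claim $S(a)\neq\emptyset$ is clean and correct, and your verification is correct as well: the mod-$3$ trick with $r\in\{27,35,121\}$ forces $3\mid a+r$ and hence $\tau(a+r)\ge 2$, and the twelve residual even $a\equiv 2\pmod 3$ in $(44,121]$ are all dispatched by some $r\in\{27,35,51,57\}$ (I checked the seven you did not list explicitly: $80+35=115$, $86+35=121$, $92+27=119$, $98+27=125$, $104+51=155$, $110+35=145$, $116+27=143$).

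Your route is genuinely different from the paper's. The paper argues by counting: among the $a/2$ odd $r<a$, at most roughly $2\pi(a)$ have $\tau(r)\le 2$ and at most $\pi(2a)-\pi(a)$ have $a+r$ prime, so for large $a$ the inequality $a/2 - 2\pi(a) - (\pi(2a)-\pi(a))>0$ leaves a surviving $r$. Invoking $p_k\ge k\log k$ makes this rigorous for $a\ge 2467$, and the remaining even $a$ with $44<a<2467$ are handled by computer. Your argument is more elementary and more constructive: by pinning down explicit $r$ via a congruence condition you avoid any prime-counting estimates and reduce the finite check from roughly $1200$ values to $12$, all doable by hand. The paper's approach has the virtue of being more systematic (it would adapt to variants where no small $r$ with $\tau(r)=3$ exists in a convenient residue class), but for the problem as stated your approach is tighter. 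One small quibble: Table~\ref{tab-D} only goes up to $a=58$, so it does not actually record most of your twelve cases; your own explicit factorizations are what do the work.
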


\begin{proof}
Let \( \pi(x) \) denote the number of primes not exceeding \( x \). When \( a > 44 \), there exists \( r < a \) such that \( \tau(r) = 3 \). For example, \( r = 27 = 7 + 7 + 13 \) and \( r = 35 = 5 + 7 + 23 \). We must show that there exists an \( r < a \) such that \( \tau(r) = 3 \) and \( a + r \notin \mathcal{PN} \).

If \( a \) is odd, then \( a + r \) is even and hence not a prime. If \( a \) is even and sufficiently large, among the numbers less than \( a \): there are \( a/2 \) odd numbers (note that 1 is odd and 2 is prime); the number of \( r \) satisfying \( r \in \mathcal{PN} \) and \( r - 2 \in \mathcal{PN} \) is at most \( 2\pi(a) \); the number of \( r \) satisfying \( a + r \in \mathcal{PN} \) does not exceed \( \pi(2a) - \pi(a) \). Therefore, we need to prove
\[
\frac{a}{2} - 2\pi(a) - (\pi(2a) - \pi(a)) > 0.
\]

For a given \( a \), there exists \( k > 2 \) such that \( (k - 1)\log(k - 1) \leq a \leq k\log(k) \).
Using \( p_k \geq k\log(k) \), we need to prove
\[
\frac{(k - 1)\log(k - 1)}{2} - k - 2k > 0,
\]
which is equivalent to \( k > 411 \). Hence, the above inequality holds for \( a \geq 2467 \).
For even numbers \( a \) with \( 44 < a < 2467 \), we verified the theorem computationally. This completes the proof.
\end{proof}

\section{On Finite Shifted Square Sequence}\label{hahaha24}
In this section, we primarily address Open Problem \ref{hahaha3}. We aim to affirm the conjecture proposed by Einstein et al. \cite{D. Einstein} with the aid of the well-known ``Four-Square Theorem".

Our investigation commences with Lemma \ref{0202}, which says that we must determine the value of
$$N_r=\min\left\{O_B(ma+r) \cdot a+(ma+r) \mid \sum_{i=1}^kx_ii^2=ma+r, \ \ m,x_i\in \mathbb{N}, 1\leq i\leq k\right\}$$
for every $r$,
where
$$O_B(M)=\min\left\{\sum_{i=1}^kx_i \mid \sum_{i=1}^kx_ii^2=M, \ \ M,x_i\in \mathbb{N}, 1\leq i\leq k\right\}.$$
We introduce the function $\iota_k(M)=O_B(M)$ as defined below.

\begin{dfn}
For any integer $n \in \mathbb{P}$, we define $\iota_k(n)$ as
$$\iota_k(n) = \min\left\{ s \,\bigg|\, n = a_1^2 + a_2^2 + \cdots + a_s^2, \, a_i \in \{1, 2, 3, \ldots, k\}, \, 1 \leq i \leq s \right\}.$$
In other words, $\iota_k(n)$ represents the minimal number of squares required to express $n$ as a sum of squares from the set $\{1^2, 2^2, 3^2, \ldots, k^2\}$. A representation of $n$ using exactly $\iota_k(n)$ squares is called \emph{optimal}.
\end{dfn}

For example, if $k = 6$ and $n = 79$, then $\iota_k(79) = 4 = \iota(79)$ (as shown in Lemma \ref{0503}), with the optimal representation $n = 6^2 + 5^2 + 3^2 + 3^2$. It is worth noting that the greedy algorithm produces $n = 2 \cdot 6^2 + 2^2 + 3 \cdot 1^2$, which is not optimal.

Our goal is to develop efficient methods for computing $\iota_k(M)$. This will be addressed in the following subsection using generating functions.

\subsection{The Generating Function for $\iota_k(M)$}
Our primary objective in this subsection is to establish the following theorem.

\begin{thm}\label{t-OBM}
There exists a polynomial-time algorithm with respect to $k$ for computing $\iota_k(r)$ for all $r \in \mathbb{N}$.
\end{thm}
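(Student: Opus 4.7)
The plan is to establish a stability property for $\iota_k$ and then reduce the general computation to a polynomial-sized precomputation. My starting point is the dynamic programming recurrence $\iota_k(r) = 1 + \min\{\iota_k(r-i^2) : 1\le i\le k,\ i^2\le r\}$ with $\iota_k(0)=0$, which computes $\iota_k(r)$ in $O(rk)$ time --- polynomial in $r$ and $k$, but not in $k$ alone once $r$ is large. To get polynomial dependence on $k$ only, I would show that $\iota_k$ becomes arithmetically predictable beyond a polynomial-in-$k$ threshold, so that only an initial segment needs to be computed.

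The key step is a stabilization lemma: there exists $N(k)=O(k^3)$ such that $\iota_k(r)=\iota_k(r-k^2)+1$ for every $r\ge N(k)$. The bound $\iota_k(r)\le \iota_k(r-k^2)+1$ is immediate. For the reverse, any representation of $r$ that avoids $k^2$ uses only squares from $\{1^2,\dots,(k-1)^2\}$ and thus has at least $r/(k-1)^2$ summands. On the other hand, writing $r-k^2 = qk^2 + s$ with $0\le s<k^2$ and applying Lemma \ref{0502} to $s$ yields a decomposition $s=a^2+b^2+c^2+d^2$ with $a,b,c,d\in\{0,1,\dots,k-1\}$; dropping the zero entries gives $\iota_k(s)\le 4$, whence $\iota_k(r-k^2)+1 \le q+5 \le r/k^2+4$. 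Comparing the two bounds, the condition $r/(k-1)^2 > r/k^2+4$ reduces to $r>4k^2(k-1)^2/(2k-1)$, so I set $N(k):=\lceil 4k^2(k-1)^2/(2k-1)\rceil = O(k^3)$. For $r\ge N(k)$ every optimal representation must use at least one copy of $k^2$, which gives the stabilization.

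Granted the lemma, the algorithm is straightforward. Precompute $\iota_k(j)$ for $0\le j\le N(k)$ via the DP recurrence in $O(N(k)\cdot k) = O(k^4)$ time, producing a table of size $O(k^3)$. On input $r$: if $r\le N(k)$, look up the table; otherwise let $r'$ be the unique element of $[N(k)-k^2,\,N(k))$ with $r'\equiv r\pmod{k^2}$ and return $\iota_k(r') + (r-r')/k^2$, the value guaranteed by iterating the stabilization lemma. As a by-product, the stabilization yields the generating-function presentation promised by the subsection title: $I(q):=\sum_{r\ge 0}\iota_k(r)q^r$ is rational with denominator $(1-q)(1-q^{k^2})$ and a numerator of degree $<N(k)$ that is read off from the precomputed table.

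The main obstacle is the stabilization lemma, and its technical heart is the uniform bound $\iota_k(s)\le 4$ for $0\le s<k^2$ obtained from Lagrange; once this is secured, the gap between the lower bound $r/(k-1)^2$ (for representations avoiding $k^2$) and the upper bound $r/k^2+4$ (for representations using $k^2$) is of order $r/k^3$, which is exactly what forces $N(k)$ to be cubic in $k$. The degenerate case $k=1$, where the only allowed square is $1$ and $\iota_1(r)=r$, must be handled separately, but causes no difficulty.
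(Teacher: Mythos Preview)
Your proof is correct and follows essentially the same strategy as the paper: prove a stabilization lemma $\iota_k(r)=\iota_k(r-k^2)+1$ for all $r$ beyond an $O(k^3)$ threshold (using Lagrange's theorem to bound $\iota_k(s)\le 4$ for $s<k^2$), then precompute $\iota_k$ up to that threshold in polynomial time. The paper's Lemma~\ref{hahaha9} gives the same stabilization with the slightly sharper threshold $(\lceil 3k/2\rceil-2)k^2$, and the paper performs the precomputation via generating functions and the operator $\circledast$ (Lemma~\ref{hahaha10}) rather than your direct dynamic-programming recurrence; neither difference is material to the theorem.
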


By definition,
\begin{equation}
\iota_k(r) = \min\left\{ \sum_{i=1}^k x_i \,\bigg|\, \sum_{i=1}^k x_i i^2 = r, \, r, x_i \in \mathbb{N}, \, 1 \leq i \leq k \right\}. \label{hahaha4}
\end{equation}
It is natural to consider the complete generating function:
$$
CF(t; q) = \prod_{i=1}^k \frac{1}{1 - t_i q^{i^2}} = \sum_{n \geq 0} \left( \sum_{ \substack{x_1 + 2^2 x_2 + \cdots + k^2 x_k = n} } t_1^{x_1} \cdots t_k^{x_k} \right) q^n,
$$
which encodes all nonnegative representations. By setting $t_i = t$ for all $i$, we obtain
$$
F(t, q) = \prod_{i=1}^k \frac{1}{1 - t q^{i^2}} = \sum_{n \geq 0} \left( \sum_{ \substack{x_1 + 2^2 x_2 + \cdots + k^2 x_k = n} } t^{x_1 + x_2 + \cdots + x_k} \right) q^n.
$$

If we denote a solution $(x_1, x_2, \ldots, x_k)$ for \eqref{hahaha4} satisfying $x_1 + \cdots + x_k = \iota_k(r)$ as \emph{optimal}, then the generating function
$$
f(t, q) = \sum_{n \geq 0} t^{\iota_k(n)} q^n
$$
extracts only one optimal representation, weighted by $t^{\iota_k(n)}$, for each $n$. It is straightforward to see that
$$
f(t, q) = \sum_{n \geq 0} t^{\iota_k(n)} q^n := \circledast F(t, q),
$$
where $\circledast$ is the operator defined as follows.

\begin{dfn}\label{hahaha12}
For a power series $G(t, q)$ in $t, q$ with nonnegative coefficients, we define $\circledast G(t, q)$ as the power series obtained from $G(t, q)$ by selecting the term of minimum degree (in $t$) within each coefficient (in $q$).
\end{dfn}

The key property $\circledast(\circledast F(t, q) \cdot \circledast G(t, q)) = \circledast (F(t, q) \cdot G(t, q))$ allows us to use Maple for efficient computation of the first $M + 1$ terms of $f(t, q)$ for any large $M$. The procedure is as follows:

\begin{enumerate}
  \item Initialize with $f_1:=\sum_{n= 0}^M t^n q^n$.

  \item Assuming $f_{i-1}$ has been calculated, proceed to compute $f_i$ as follows:
   first compute $\circledast f_{i-1} \cdot \sum_{n=0}^{h_i} t^n q^{i^2 n}$
  where $h_i= \lfloor M/i^2 \rfloor$ will be optimized,
  and then remove all terms with degrees in $q$ exceeding $M$.

  \item Set $f(t,q)=f_k$.
\end{enumerate}

Thus, we have established the following result.

\begin{lem}\label{hahaha10}
Let $k$ be a fixed positive integer. For a given $M$, the first $M + 1$ terms $f(t, q)|_{q^{\leq M}}$ of $f(t, q)$ can be computed in polynomial time in $M$. Consequently, $\iota_k(r)$ for $r \leq M$ can be computed in polynomial time in $M$.
\end{lem}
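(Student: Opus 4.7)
The plan is to verify both the correctness and the running time of the three-step procedure described above. Correctness rests on the distributive property of $\circledast$ noted in the remark preceding the lemma: for any two power series $G, H$ in $t, q$ with nonnegative coefficients,
$$\circledast(GH) = \circledast\bigl((\circledast G)(\circledast H)\bigr).$$
Indeed, the coefficient of $q^n$ in $GH$ is a nonnegative combination of products of coefficients in $G$ and $H$, and the $t$-degree of any such product is the sum of the $t$-degrees of its two factors. Consequently the minimum $t$-degree appearing in $[q^n](GH)$ equals the minimum of $d_{n_1}(G) + d_{n_2}(H)$ over all $n_1 + n_2 = n$, where $d_m(\cdot)$ denotes the minimum $t$-degree in the $q^m$ coefficient. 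Applied inductively to the factorization
$$F(t,q) = \prod_{i=1}^k \frac{1}{1-tq^{i^2}} = \prod_{i=1}^k \sum_{n \geq 0} t^n q^{i^2 n},$$
this shows that the output $f_k$ of the procedure agrees with $f(t,q)$ in every coefficient of $q^n$ for $n \leq M$, provided each intermediate truncation at $q$-degree $M$ is respected (which it is, since higher-degree terms contribute nothing to the lower-degree output).

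For the complexity analysis, I would observe that after each application of $\circledast$ the polynomial $f_i$ has the shape $\sum_{n=0}^{M} t^{e_i(n)} q^n$ (with a sentinel value denoting zero coefficients), so $f_i$ is stored as an array of $M+1$ integers. The update from $f_{i-1}$ to $f_i$ is then governed by the recurrence
$$e_i(n) = \min_{0 \leq j \leq \lfloor n/i^2 \rfloor} \bigl(e_{i-1}(n - j\cdot i^2) + j\bigr), \qquad 0 \leq n \leq M,$$
which requires $O(M/i^2)$ arithmetic operations per value of $n$, hence $O(M^2/i^2)$ operations for the $i$-th stage. Summing over $i$ gives $\sum_{i=1}^{k} M^2/i^2 = O(M^2)$, and the value $\iota_k(r)$ for any $r \leq M$ is then read off as the $t$-exponent of the $q^r$ coefficient of $f_k$.

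No substantive obstacle arises; the only point requiring a moment of care is the justification that truncating the second factor at $h_i = \lfloor M/i^2 \rfloor$ loses no information, which is immediate because any discarded monomial already has $q$-degree exceeding $M$. The crucial quantitative ingredient is that applying $\circledast$ after each multiplication — rather than only at the end — is legitimate by the distributive identity, and it is precisely this interleaving that keeps the intermediate arrays of size $M+1$ and yields the polynomial bound in $M$.
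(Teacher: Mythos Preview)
Your proof is correct and follows essentially the same approach as the paper: you verify the three-step procedure via the distributive identity $\circledast(GH)=\circledast((\circledast G)(\circledast H))$ and then bound the cost of each stage. The paper treats the algorithm description itself as the proof, so your version simply makes the correctness and complexity arguments more explicit; in particular, your $O(M^2)$ bound via $\sum_i M^2/i^2$ is a detail the paper leaves implicit.
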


To completely determine $\iota_k(M)$ for any natural number $M$, we rely on two key inequalities derived from the ``Four-Square Theorem''.

\begin{prop}\label{hahaha1}
For any $M \in \mathbb{N}$, the following bounds hold:
 $\left\lceil \frac{M}{k^2}\right\rceil \leq \iota_k(M)\leq \left\lfloor \frac{M}{k^2}\right\rfloor+4$.
\end{prop}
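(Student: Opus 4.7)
The plan is to prove the two inequalities independently: the lower bound by a direct size argument, and the upper bound by reducing to Lagrange's Four-Square Theorem (Lemma \ref{0502}) on the remainder modulo $k^2$.

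For the lower bound, I would start from any representation $M=a_1^2+a_2^2+\cdots+a_s^2$ witnessing $\iota_k(M)$, where each $a_i\in\{1,2,\ldots,k\}$. Since every summand is bounded by $k^2$, we get $M\le s\cdot k^2$, hence $s\ge M/k^2$. Because $s$ is an integer, this forces $s\ge \lceil M/k^2\rceil$, and taking the minimum over all optimal representations yields $\iota_k(M)\ge \lceil M/k^2\rceil$.

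For the upper bound, write $M=qk^2+r$ with $q=\lfloor M/k^2\rfloor$ and $0\le r<k^2$. I would construct an explicit representation of $M$ using at most $q+4$ squares from $\{1^2,\ldots,k^2\}$: take $q$ copies of $k^2$ to account for $qk^2$, and then handle $r$ separately. If $r=0$ we are done; if $r\ge 1$, apply Lagrange's Four-Square Theorem to express $r=b_1^2+b_2^2+b_3^2+b_4^2$ with $b_j\in\mathbb{N}$. The point is that since $r<k^2$, each $b_j^2\le r<k^2$ forces $b_j\le k-1<k$, so those summands are legal elements of $\{1^2,\ldots,k^2\}$ (zero $b_j$ can simply be discarded). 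Combining gives $\iota_k(M)\le q+4=\lfloor M/k^2\rfloor+4$.

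There is no real obstacle here: both inequalities fall out immediately once one notices that the remainder after subtracting as many copies of $k^2$ as possible automatically sits in the range $[0,k^2)$, which is precisely the range where Lagrange's decomposition uses parts strictly below $k$ and thus inside the alphabet $\{1,\ldots,k\}$. The only small point to check carefully is the borderline case $r=0$, which is trivially covered by the construction.
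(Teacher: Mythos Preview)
Your proof is correct and follows essentially the same route as the paper: write $M=\lfloor M/k^2\rfloor\cdot k^2+r_1$ with $0\le r_1<k^2$, obtain the lower bound from the trivial observation that each summand is at most $k^2$, and obtain the upper bound by applying Lagrange's Four-Square Theorem to $r_1$ (noting that the parts are automatically $<k$). The paper's argument is slightly terser but identical in substance.
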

\begin{proof}
Let $M$ be expressed as $M = s k^2 + r_1$, where $s \geq 0$ and $0 \leq r_1 \leq k^2 - 1$.
If $r_1 = 0$, then clearly $\iota_k(M) = s$.
If $1 \leq r_1 \leq k^2 - 1$, we have
$$
\left\lceil \frac{M}{k^2} \right\rceil = s + 1 \leq \iota_k(M) \leq s + \iota_k(r_1) \leq s + 4 = \left\lfloor \frac{M}{k^2} \right\rfloor + 4.
$$
The upper bound follows directly from the ``Four-Square Theorem''.
\end{proof}

The following lemma establishes that $\iota_k(r)$ exhibits a certain stability, thereby reducing the computation of $\iota_k(r)$ for $r \in \mathbb{N}$ to only a bounded range of $r$.

\begin{lem}\label{hahaha9}
For a given $k \in \mathbb{N}$, and for any $r \geq \left( \left\lceil \frac{3k}{2} \right\rceil - 2 \right) k^2$, with $r \in \mathbb{N}$, it holds that $\iota_k(k^2 + r) = \iota_k(r) + 1$.
\end{lem}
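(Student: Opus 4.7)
The inequality $\iota_k(k^2+r)\leq\iota_k(r)+1$ is trivial---append one copy of $k^2$ to an optimal representation of $r$---so the content of the lemma is the reverse inequality. My plan is to prove $\iota_k(k^2+r)\geq\iota_k(r)+1$ by contradiction: fix an optimal representation $\sum_{j=1}^{k}s_jj^2=k^2+r$ with $\sum_j s_j=\iota_k(k^2+r)\leq\iota_k(r)$ and split on whether $k^2$ itself appears.

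If $s_k\geq 1$, deleting one copy of $k^2$ yields a representation of $r$ with $\iota_k(k^2+r)-1\leq\iota_k(r)-1$ summands, contradicting the minimality in the definition of $\iota_k(r)$. So one may assume $s_k=0$; then every summand is at most $(k-1)^2$, and hence
\[
k^2+r=\sum_{j=1}^{k-1}s_jj^2\leq(k-1)^2\sum_{j=1}^{k-1}s_j=(k-1)^2\,\iota_k(k^2+r)\leq(k-1)^2\,\iota_k(r).
\]
Setting $m:=\lfloor r/k^2\rfloor$ and invoking the Four-Square bound $\iota_k(r)\leq m+4$ from Proposition~\ref{hahaha1}, combined with $r\geq mk^2$, a direct expansion cancels the common $mk^2$ terms and reduces this to the purely algebraic inequality $(2k-1)(m+4)\leq 3k^2$.

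The hypothesis $r\geq(\lceil 3k/2\rceil-2)k^2$ forces $m+4\geq\lceil 3k/2\rceil+2$, so it remains to verify $(2k-1)(\lceil 3k/2\rceil+2)>3k^2$ for every $k\geq 1$. A brief parity split gives
\[
(2k-1)\bigl(\lceil 3k/2\rceil+2\bigr)=\begin{cases}3k^2+\tfrac{5k}{2}-2&\text{if $k$ is even,}\\ 3k^2+\tfrac{7k-5}{2}&\text{if $k$ is odd,}\end{cases}
\]
each strictly greater than $3k^2$ for all $k\geq 1$. This contradicts $(2k-1)(m+4)\leq 3k^2$ and completes the proof.

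The argument is conceptually short; the main obstacle is that the two coarse estimates used---the Four-Square bound $\iota_k(r)\leq\lfloor r/k^2\rfloor+4$ and the brute comparison $\sum s_jj^2\leq(k-1)^2\sum s_j$---leave only $\tfrac{5k}{2}-2$ or $\tfrac{7k-5}{2}$ of slack at the stated threshold. The parity-sensitive ceiling $\lceil 3k/2\rceil$ is precisely the clean statement under which this slack remains positive uniformly in $k$, and forgetting to treat the odd-$k$ case separately is the only real bookkeeping hazard. No deeper structural input (Pythagorean decompositions of $k^2$, refined bounds on coefficients $s_j$, etc.) is required.
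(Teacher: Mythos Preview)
Your proof is correct and follows essentially the same route as the paper: reduce to the case where no optimal representation of $k^2+r$ uses $k^2$, bound $k^2+r\le (k-1)^2\iota_k(k^2+r)\le (k-1)^2\iota_k(r)$, apply the Four-Square bound $\iota_k(r)\le m+4$ with $m=\lfloor r/k^2\rfloor$, and obtain a contradiction from the hypothesis $m\ge\lceil 3k/2\rceil-2$. Your algebraic verification (reducing to $(2k-1)(m+4)\le 3k^2$ and checking the parity cases) is more explicit than the paper's, which simply asserts the equivalent inequality $mk^2\ge(m+3)(k-1)^2$ and then concludes $\iota_k(k^2+r)\ge m+5$.
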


\begin{proof}
It is clear that $\iota_k(k^2 + r) \leq \iota_k(r) + 1$ always holds. If an optimal representation of $k^2 + r$ includes a term of $k^2$, then removing this term yields $\iota_k(r) \leq \iota_k(k^2 + r) - 1$, and thus $\iota_k(k^2 + r) = \iota_k(r) + 1$.

Assume, for contradiction, that $\iota_k(k^2 + r) < \iota_k(r) + 1$. This implies that $k^2 + r$ cannot have an optimal representation involving $k^2$. We will show that this leads to a contradiction.

Let $r = m k^2 + j$, where $0 \leq j \leq k^2 - 1$. By the Four-Square Theorem, $j$ can be expressed as $j = a^2 + b^2 + c^2 + d^2$, where $a, b, c, d \in \mathbb{N}$ and $a, b, c, d < k$.

Given the assumption for $r$, we have $m \geq \left\lceil \frac{3k}{2} \right\rceil - 2$. We choose $m$ such that $m k^2 \geq (m + 3)(k - 1)^2$. For such an $m$, the following inequality holds:
$$
m k^2 + j = m k^2 + a^2 + b^2 + c^2 + d^2 \geq (m + 3)(k - 1)^2.
$$
This implies that without using $k^2$, $\iota_k(k^2 + r)$ must be at least $m + 5$. Therefore, we have
$$
m + 5 \leq \iota_k(k^2 + r) < \iota_k(r) + 1 \leq m + 5,
$$
which is a contradiction.
\end{proof}

\begin{exa}\label{hahaha6}
If $k=3$ and $M=50$, then we have $h_2=12, h_3=5$, and $f_1=\sum_{n= 0}^{50} t^n q^n$.
Therefore, we compute
\begin{small}
\begin{align*}
f_2&=\circledast \left(f_1 \cdot \sum_{n=0}^{12} t^n q^{4 n}\right)\Bigg |_{q^{\leq 50}}
\\&={q}^{50}{t}^{14}+{q}^{49}{t}^{13}+{q}^{47}{t}^{14}+{q}^{48}{t}^{12}+{q
}^{46}{t}^{13}+{q}^{45}{t}^{12}+{q}^{43}{t}^{13}+{q}^{44}{t}^{11}+{q}^
{42}{t}^{12}+{q}^{41}{t}^{11}+{q}^{39}{t}^{12}
\\&\ \ +{q}^{40}{t}^{10}+{q}^{
38}{t}^{11}+{q}^{37}{t}^{10}+{q}^{35}{t}^{11}+{q}^{36}{t}^{9}+{q}^{34}
{t}^{10}+{q}^{33}{t}^{9}+{q}^{31}{t}^{10}+{q}^{32}{t}^{8}+{q}^{30}{t}^
{9}+{q}^{29}{t}^{8}
\\&\ \ +{q}^{27}{t}^{9}+{q}^{28}{t}^{7}+{q}^{26}{t}^{8}+{q
}^{25}{t}^{7}+{q}^{23}{t}^{8}+{q}^{24}{t}^{6}+{q}^{22}{t}^{7}+{q}^{21}
{t}^{6}+{q}^{19}{t}^{7}+{q}^{20}{t}^{5}+{q}^{18}{t}^{6}+{q}^{17}{t}^{5
}
\\&\ \ +{q}^{15}{t}^{6}+{q}^{16}{t}^{4}+{q}^{14}{t}^{5}+{q}^{13}{t}^{4}+{q}^
{11}{t}^{5}+{q}^{12}{t}^{3}+{q}^{10}{t}^{4}+{q}^{9}{t}^{3}+{q}^{7}{t}^
{4}+{q}^{8}{t}^{2}+{q}^{6}{t}^{3}+{q}^{5}{t}^{2}
\\&\ \ +{q}^{3}{t}^{3}+{q}^{4}t+{q}^{2}{t}^{2}+qt+1,
\end{align*}
\end{small}
and
\begin{small}
\begin{align*}
f(t, q)&=f_3=\circledast \left(f_2\cdot \sum_{n=0}^{5} t^n q^{9 n}\right)\Bigg |_{q^{\leq 50}}
\\&={q}^{50}{t}^{7}+{q}^{49}{t}^{6}+{q}^{48}{t}^{7}+{q}^{47}{t}^{7}+{q}^{
46}{t}^{6}+{q}^{45}{t}^{5}+{q}^{44}{t}^{6}+{q}^{43}{t}^{7}+{q}^{42}{t}
^{7}+{q}^{41}{t}^{6}+{q}^{40}{t}^{5}+{q}^{39}{t}^{6}
\\&\ \ +{q}^{38}{t}^{6}
+{q}^{37}{t}^{5}+{q}^{36}{t}^{4}+{q}^{35}{t}^{5}+{q}^{34}{t}^{6}+{q}^{33
}{t}^{6}+{q}^{32}{t}^{5}+{q}^{31}{t}^{4}+{q}^{30}{t}^{5}+{q}^{29}{t}^{
5}+{q}^{28}{t}^{4}+{q}^{27}{t}^{3}
\\&\ \ +{q}^{26}{t}^{4}+{q}^{25}{t}^{5}
+{q}^{24}{t}^{5}+{q}^{23}{t}^{4}+{q}^{22}{t}^{3}+{q}^{21}{t}^{4}+{q}^{20}{
t}^{4}+{q}^{19}{t}^{3}+{q}^{18}{t}^{2}+{q}^{17}{t}^{3}+{q}^{16}{t}^{4}
+{q}^{15}{t}^{4}
\\&\ \ +{q}^{14}{t}^{3}+{q}^{13}{t}^{2}+{q}^{12}{t}^{3}
+{q}^{11}{t}^{3}+{q}^{10}{t}^{2}+{q}^{7}{t}^{4}+{q}^{9}t+{q}^{8}{t}^{2}+{q}^
{6}{t}^{3}+{q}^{5}{t}^{2}+{q}^{3}{t}^{3}+{q}^{4}t
\\&\ \ +{q}^{2}{t}^{2}+qt+1.
\end{align*}
\end{small}
For instance, the coefficient $[q^{13}] f_2 =t^4$ while $[q^{13}] f_3=t^2$. These indicate: i) If using only $1,4$, the minimal number of squares needed to represent $13$ is $4$, achieved by $13 = 4 + 4 + 4 + 1$;
ii) If using $1,4,9$, the minimal number of squares needed to represent $13$ is $2$, achieved by $13 = 9 + 4$.

Lemma \ref{hahaha9} shows that it suffices to compute $\iota_k(r)$ for $r \leq u = \left(\left\lceil\frac{3k}{2}\right\rceil - 1\right)k^2$. In practice, this bound may be even smaller. For instance, when $k=3$, the bound suggests computing $\iota_3(r)$ for $r \leq 36$. Indeed, the stable property $\iota_3(r + 9) = \iota_3(r) + 1$ holds for all $r \geq 8$.

To better illustrate the stable property, consider the function $f(t, q) = \sum_{i=0}^8 f^i$, where $f^i$ extracts all terms corresponding to $q^{9s + i}$. The terms not implied by the stable property are highlighted in bold:

$$\begin{small}\begin{aligned}
f^0&=\mathbf{1}+tq^{9}+{t}^{2}{q}^{18}+{t}^{3}{q}^{27}+{t}^{4}{q}^{36}+{t}^{5}{q}^{45}\\
f^1&=\mathbf{tq}+{t}^{2}q^{10}+{t}^{3}{q}^{19}+{t}^{4}{q}^{28}+{t}^{5}{q}^{37}+{t}^{6}{q}^{46}\\
f^2&=\mathbf{{t}^{2}q^2}+{t}^{3}q^{11}+{t}^{4}{q}^{20}+{t}^{5}{q}^{29}+{t}^{6}{q}^{38}+{t}^{7}{
q}^{47}\\
f^3&=\mathbf{{t}^{3}q^3+{t}^{3}q^{12}}+{t}^{4}{q}^{21}+{t}^{5}{q}^{30}+{t}^{6}{q}^{39}+{t}^{7}{
q}^{48}\\
f^4&=\mathbf{tq^4}+{t}^{2}q^{13}+{t}^{3}{q}^{22}+{t}^{4}{q}^{31}+{t}^{5}{q}^{40}+{t}^{6}{q}^{49}\\
f^5&=\mathbf{{t}^{2}q^5}+{t}^{3}q^{14}+{t}^{4}{q}^{23}+{t}^{5}{q}^{32}+{t}^{6}{q}^{41}+{t}^{7}{
q}^{50}\\
f^6&=\mathbf{{t}^{3}q^6}+{t}^{4}q^{15}+{t}^{5}{q}^{24}+{t}^{6}{q}^{33}+{t}^{7}{q}^{42}\\
f^7&=\mathbf{{t}^{4}q^7+{t}^{4}q^{16}}+{t}^{5}{q}^{25}+{t}^{6}{q}^{34}+{t}^{7}{q}^{43}\\
f^8&=\mathbf{{t}^{2}q^8}+{t}^{3}q^{17}+{t}^{4}{q}^{26}+{t}^{5}{q}^{35}+{t}^{6}{q}^{44}.
\end{aligned}\end{small}$$
All values of $\iota_3(r)$ can be deduced from the boldfaced terms. For example, $\iota_3(52) = \iota_3(16 + 4 \times 9) = \iota_3(16) + 4 = 8$.
\end{exa}

Next, we demonstrate that $h_i$ can be much smaller through the following lemma.
\begin{lem}\label{hahaha7}
Let $(x_1,\dots, x_k)$ be an optimal solution for $\iota_k(r)$, where $r=\sum_{i=1}^k x_i i^2$. Then, the following inequalities hold:
$x_i\le 3$ for $i\le \left\lfloor \frac{k}{2}\right\rfloor$, and
$x_i \le \left\lfloor \frac{4k^2}{k^2-i^2}\right\rfloor $ for $\left\lfloor \frac{k}{2}\right\rfloor < i\leq k-1$.
\end{lem}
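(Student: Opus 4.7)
The plan is to prove both inequalities by an exchange argument: if $x_i$ exceeds the claimed bound in an optimal solution, I construct an alternative representation of $r$ using strictly fewer squares, all with bases in $\{1, \ldots, k\}$, contradicting optimality.

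For the first bound, the key observation is that $i \le \lfloor k/2 \rfloor$ gives $2i \le k$, so $(2i)^2 = 4i^2$ is an admissible square. If $x_i \ge 4$, replacing four copies of $i^2$ by one copy of $(2i)^2$ preserves $r$ and decreases the total count $\sum_j x_j$ by $3$, contradicting optimality. Hence $x_i \le 3$.

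For the second bound I combine a greedy step toward $k^2$ with Lagrange's Four-Square Theorem. Write $x_i \cdot i^2 = s k^2 + \rho$, where $s = \lfloor x_i i^2 / k^2 \rfloor$ and $0 \le \rho < k^2$. By Lemma \ref{0502}, $\rho$ is a sum of at most four squares, and since $\rho < k^2$ each base appearing is at most $k-1$, hence admissible. Replacing the $x_i$ copies of $i^2$ by $s$ copies of $k^2$ together with the squares in this decomposition of $\rho$ yields a new representation of $r$ in which the contribution from the altered terms drops from $x_i$ to at most $s+4$. The resulting gain satisfies
$$x_i - (s+4) \ge x_i - \frac{x_i i^2}{k^2} - 4 = \frac{x_i(k^2 - i^2)}{k^2} - 4,$$
which is strictly positive whenever $x_i > 4k^2/(k^2-i^2)$. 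Since the gain is an integer, optimality forces $x_i \le \lfloor 4k^2/(k^2-i^2) \rfloor$.

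The only subtlety I anticipate is verifying the alphabet constraint on the replacement squares: every base used must lie in $\{1, \ldots, k\}$. For the first bound this follows from $2i \le k$, and for the second from the strict inequality $\rho < k^2$, which forces each base in the Four-Square representation of $\rho$ to be at most $k-1$. Beyond these bookkeeping checks I do not foresee a serious obstacle.
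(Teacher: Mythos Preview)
Your proof is correct and follows essentially the same approach as the paper's. Both arguments use the identity $4i^2=(2i)^2$ for the first bound, and for the second bound both invoke the Four-Square Theorem via the estimate $\iota_k(M)\le\lfloor M/k^2\rfloor+4$ to produce a strictly shorter representation; the only cosmetic difference is that the paper replaces exactly $m_i+1$ copies of $i^2$ (where $m_i=\lfloor 4k^2/(k^2-i^2)\rfloor$) while you replace all $x_i$ copies at once, but the resulting inequality and contradiction are identical.
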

\begin{proof}
For $i \leq \left\lfloor \frac{k}{2} \right\rfloor$, since $4i^2$ can be expressed as $(2i)^2$, it follows that $x_i \leq 3$.

For $\left\lfloor \frac{k}{2} \right\rfloor < i \leq k - 1$, suppose to the contrary that $x_i > m_i := \left\lfloor \frac{4k^2}{k^2 - i^2} \right\rfloor$ in an optimal solution. Consider $\iota_k((m_i + 1)i^2)$. We have
\begin{small}
\begin{align*}
&m_i+1=\left\lfloor \frac{4k^2}{k^2-i^2}\right\rfloor+1>\frac{4k^2}{k^2-i^2}\Longleftrightarrow \frac{(m_i+1)(k^2-i^2)}{k^2}>4
\Longleftrightarrow \left\lceil \frac{(m_i+1)(k^2-i^2)}{k^2}\right\rceil >4
\\ \Longleftrightarrow & (m_i+1)-\left\lfloor \frac{(m_i+1)i^2}{k^2}\right\rfloor >4
 \Longleftrightarrow \left\lfloor \frac{(m_i+1)i^2}{k^2}\right\rfloor +4< m_i+1,
\end{align*}
\end{small}
This implies $\iota_k((m_i + 1)i^2) \leq \left\lfloor \frac{(m_i + 1)i^2}{k^2} \right\rfloor + 4 < m_i + 1$. Thus, we can replace $m_i + 1$ copies of $i^2$ with a better representation using only $\iota_k((m_i + 1)i^2)$ squares, leading to a contradiction.
\end{proof}

By Lemma \ref{hahaha7}, we can set $h_i = 3$ for $i \leq \left\lfloor \frac{k}{2} \right\rfloor$ and $h_i = \left\lfloor \frac{4k^2}{k^2 - i^2} \right\rfloor$ for $\left\lfloor \frac{k}{2} \right\rfloor < i \leq k - 1$. This significantly reduces the value of $h_i$. For example, when $k=3$, we find $h_1=3$ and $h_2=7$. Readers can compare this with Example \ref{hahaha6}. Note that in the first step, we should set $f_1 := \sum_{n=0}^3 t^n q^n$ (since $h_1=3$).

We conclude this subsection with the proof of Theorem \ref{t-OBM}.

\begin{proof}[Proof of Theorem \ref{t-OBM}]
By Lemma \ref{hahaha9}, when $r \geq \left(\left\lceil \frac{3k}{2} \right\rceil - 2\right)k^2$, we have $\iota_k(k^2 + r) = \iota_k(r) + 1$ and $\iota_k(sk^2 + r) = \iota_k(r) + s$ for $s \in \mathbb{N}$. Therefore, it suffices to determine $\iota_k(r)$ for $r \leq u = \left(\left\lceil \frac{3k}{2} \right\rceil - 1\right)k^2$. This allows us to impose the condition $x_k \leq \left\lceil \frac{3k}{2} \right\rceil - 1$ on the optimal representation. Thus, we set $h_k := \left\lceil \frac{3k}{2} \right\rceil - 1$ and $M := \left(\left\lceil \frac{3k}{2} \right\rceil - 1\right)k^2$.

By Lemma \ref{hahaha10}, $\iota_k(r)$ for $r \leq M$ can be computed in polynomial time in $M$. This completes the proof.
\end{proof}

\subsection{The Frobenius Number for Shifted Square Sequences}

First, we establish the following result regarding the behavior of $N_r$ when $a$ is sufficiently large.

\begin{lem}\label{hahaha2}
If $a \geq 3k^2$, then $N_r = N_r(0)$ for any given $r$, meaning $N_r$ attains its minimum value when $m = 0$.
\end{lem}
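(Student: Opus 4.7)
The plan is to reduce the claim to a one-line arithmetic inequality about $\iota_k$ and then close it with the bounds supplied by Proposition \ref{hahaha1}. For the sequence $A=(a, a+1^2, \ldots, a+k^2)$ we have $h=d=1$ and $b_i=i^2$, so Lemma \ref{0202} specializes to
\[
N_r(m) = \iota_k(ma+r)\cdot a + (ma+r),
\]
and therefore
\[
N_r(m) - N_r(0) = \bigl(\iota_k(ma+r) - \iota_k(r)\bigr)\,a + ma.
\]
Proving $N_r = N_r(0)$ is thus equivalent to showing $\iota_k(r) \leq \iota_k(ma+r) + m$ for every integer $m \geq 1$.

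To establish this inequality I would invoke Proposition \ref{hahaha1}, which gives both the upper bound $\iota_k(r) \leq \lfloor r/k^2 \rfloor + 4 \leq r/k^2 + 4$ and the lower bound $\iota_k(ma+r) \geq \lceil (ma+r)/k^2 \rceil \geq (ma+r)/k^2$. Dropping the ceilings and floors, which only costs slack in the favorable direction, yields
\[
\iota_k(ma+r) + m - \iota_k(r) \geq \frac{ma+r}{k^2} + m - \frac{r}{k^2} - 4 = \frac{m(a+k^2)}{k^2} - 4,
\]
which is non-negative precisely when $m(a+k^2) \geq 4k^2$. Under the hypothesis $a \geq 3k^2$ one has $a+k^2 \geq 4k^2$, so the bound holds for every $m \geq 1$, completing the proof.

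There is essentially no real obstacle here: Proposition \ref{hahaha1}, which itself rests on Lagrange's Four-Square Theorem, is strong enough to reduce the statement to elementary algebra. The hypothesis $a \geq 3k^2$ is comfortably sufficient and is actually only binding for $m=1$; the same computation goes through for $m\geq 2$ under the much milder condition $a\geq k^2$, and the $m=1$ case already works as soon as $a \geq 2k^2+1$. So the chosen threshold leaves room to spare.
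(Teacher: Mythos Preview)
Your proof is correct and follows essentially the same approach as the paper: both reduce to Proposition~\ref{hahaha1} and close with elementary arithmetic. The only cosmetic difference is that the paper compares consecutive terms to show $N_r(m+1)\ge N_r(m)$ (hence monotonicity), whereas you compare $N_r(m)$ directly against $N_r(0)$; neither variant requires any idea beyond the floor/ceiling bounds already available.
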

\begin{proof}
Fix $r$ and consider any non-negative integer $m$. Express $ma + r$ as $sk^2 + r_1$ where $0 \leq r_1 < k^2$. Given $a \geq 3k^2$, we have $(m+1)a + r = sk^2 + a + r_1 \geq (s+3)k^2 + r_1$. By Proposition~\ref{hahaha1}, it follows that $\iota_k(ma + r) = s + d$ and $\iota_k((m+1)a + r) \geq s + 3$ for some $0 \leq d \leq 4$. Therefore,
\begin{align*}
N_r(m+1) &\geq (s+3)a + (m+1)a + r = (s+4)a + ma + r \\
&\geq (s + d)a + ma + r = N_r(m).
\end{align*}
This shows that $N_r(m)$ is non-decreasing; hence, it attains its minimum at $m = 0$.
\end{proof}

\begin{lem}\label{hahaha11}
Let $k \geq 2$ and define $u = \left(\left\lceil\frac{3k}{2}\right\rceil + 1\right)k^2$. For any $a \geq u$, the following holds:
$$
\mathop{\max}\limits_{0 \leq r \leq a-1}\{N_r\} = \mathop{\max}\limits_{a - k^2 \leq r \leq a - 1}\{N_r\}.
$$
\end{lem}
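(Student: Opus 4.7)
My plan is to first apply Lemma \ref{hahaha2}, which is available because $a \geq u \geq 3k^{2}$ for $k \geq 2$, to reduce to the identity $N_{r} = \iota_k(r)\,a + r$. It then suffices to show that for every $r \in [0,\,a-k^{2}-1]$ there is some $r^{*} \in [a-k^{2},\,a-1]$ with $N_{r^{*}} \geq N_{r}$. I take $r^{*} = r + jk^{2}$, where $j \geq 1$ is the unique integer placing $r^{*}$ in $[a-k^{2},\,a-1]$ (existence and uniqueness follow because this interval contains exactly one representative of each residue class modulo $k^{2}$). Since $jk^{2} = r^{*} - r < a$, the difference $N_{r^{*}} - N_{r} = (\iota_k(r^{*}) - \iota_k(r))\,a + jk^{2}$ is nonnegative if and only if $\iota_k(r^{*}) \geq \iota_k(r)$, so the task reduces to this integer inequality.

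I would then split on the size of $r$. When $r \geq (\lceil 3k/2\rceil - 2)k^{2}$, Lemma \ref{hahaha9} applied $j$ times in succession gives $\iota_k(r^{*}) = \iota_k(r) + j$, settling the easy range. When $r < (\lceil 3k/2\rceil - 2)k^{2}$, I let $i^{*} \geq 1$ be the smallest index with $r + i^{*}k^{2} \geq (\lceil 3k/2\rceil - 2)k^{2}$; then Lemma \ref{hahaha9} iterated from level $i^{*}$ up to level $j$ yields $\iota_k(r^{*}) = \iota_k(r + i^{*}k^{2}) + (j - i^{*})$, while Proposition \ref{hahaha1} gives $\iota_k(r + i^{*}k^{2}) \geq \lceil (r + i^{*}k^{2})/k^{2}\rceil = \lceil r/k^{2}\rceil + i^{*}$, so that
\[
\iota_k(r^{*}) \geq \lceil r/k^{2}\rceil + j.
\]

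To convert this into $\iota_k(r^{*}) \geq \iota_k(r)$ I use the upper estimates from Proposition \ref{hahaha1}. If $r$ is a multiple of $k^{2}$, then $\iota_k(r) = r/k^{2}$ exactly (trivial representation plus the matching lower bound of Proposition \ref{hahaha1}), and $\iota_k(r^{*}) \geq r/k^{2} + j$ wins for any $j \geq 1$. If $r$ is not a multiple of $k^{2}$, then $\lceil r/k^{2}\rceil = \lfloor r/k^{2}\rfloor + 1$ while $\iota_k(r) \leq \lfloor r/k^{2}\rfloor + 4$, so the comparison reduces to $j \geq 3$. Chaining
\[
jk^{2} \geq a - k^{2} - r \geq (\lceil 3k/2\rceil + 1)k^{2} - k^{2} - \bigl((\lceil 3k/2\rceil - 2)k^{2} - 1\bigr) = 2k^{2} + 1
\]
furnishes this bound, completing the argument.

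The main obstacle is precisely the last numerical step, where the definition $u = (\lceil 3k/2\rceil + 1)k^{2}$ is used sharply: the $+1$ in $u$ is exactly what promotes $j \geq 2$ to $j \geq 3$, and $j \geq 3$ is in turn exactly the size needed to absorb the slack $4 - 1 = 3$ in the Proposition \ref{hahaha1} estimates in the non-multiple-of-$k^{2}$ subcase. With a weaker hypothesis such as $a \geq \lceil 3k/2\rceil k^{2}$ one would only get $j \geq 2$, and the second case of the argument would break down.
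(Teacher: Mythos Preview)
Your proof is correct and uses essentially the same ingredients as the paper's proof: Lemma~\ref{hahaha2} to reduce to $N_r=\iota_k(r)a+r$, Lemma~\ref{hahaha9} for the stable range, and the two-sided bounds of Proposition~\ref{hahaha1} for the unstable range. The only organizational difference is that the paper packages the Proposition~\ref{hahaha1} bounds into a universal ``3-step monotonicity'' $\iota_k(y+3k^2)\ge\iota_k(y)$ and then combines it with at most two applications of Lemma~\ref{hahaha9} at the top of the range, whereas you bound $\iota_k(r^*)$ from below and $\iota_k(r)$ from above directly and then verify $j\ge 3$; these are two equivalent bookkeepings of the same inequality.
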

\begin{proof}
Since $u \geq 3k^2$, by Lemma~\ref{hahaha2}, we have $N_r = N_r(0)$. From Lemma~\ref{hahaha9}, for $r \geq \left(\left\lceil\frac{3k}{2}\right\rceil - 2\right)k^2$, $\iota_k(sk^2 + r)$ is non-decreasing in $s \geq 0$. However, exceptions may occur for smaller $r$. For instance, when $k = 5$, we observe $\iota_k(7) = 4$, $\iota_k(5^2 + 7) = 2$, $\iota_k(2 \times 5^2 + 7) = 3$, etc.

Let $y = mk^2 + j$ with $1 \leq j \leq k^2 - 1$. The Four-Square Theorem ensures $\iota_k(y) \leq m + 4$. Additionally, we have $\iota_k(y + 3k^2) \geq \left\lceil \frac{y + 3k^2}{k^2} \right\rceil = m + 4$. Consequently, if $r \geq \left(\left\lceil\frac{3k}{2}\right\rceil - 2\right)k^2 + 2k^2$, then $\iota_k(r) \geq \iota_k(r - 3k^2)$. By the stable property, this implies $\iota_k(r) \geq \iota_k(r - sk^2)$ for all $s \geq 0$. Therefore, for $a \geq u = \left(\left\lceil\frac{3k}{2}\right\rceil - 2\right)k^2 + 3k^2$, we obtain $\mathop{\max}\limits_{0 \leq r \leq a - 1}\{N_r\} = \mathop{\max}\limits_{a - k^2 \leq r \leq a - 1}\{N_r\}$.
\end{proof}

The following theorem demonstrates that the Frobenius formula for shifted square sequences is a ``congruence class function'' modulo $k^2$, partitioning the function into $k^2$ classes based on the residue of $a$ modulo $k^2$. This structure is analogous to the conjecture proposed by Einstein et al.~\cite{D. Einstein}.

\begin{thm}\label{hahaha8}
Let $k$ be a fixed positive integer, and consider the sequence $A(a) = (a, a + 1, a + 2^2, \ldots, a + k^2)$. There exist non-decreasing sequences of non-negative integers $t_k = (t_{k,j})_{0 \leq j \leq k^2 - 1}$ and $r_k = (r_{k,j})_{0 \leq j \leq k^2 - 1}$ such that for all $a \geq u = \left(\left\lceil\frac{3k}{2}\right\rceil + 1\right)k^2$, the Frobenius number is given by:
$$
g(A(a)) = (t_{k,j} \cdot a + r_{k,j}) + (a + k^2)\left(\left\lfloor \frac{a}{k^2} \right\rfloor - \left\lceil \frac{3k}{2} \right\rceil - 1\right),
$$
where $a \equiv j \pmod{k^2}$. Furthermore, $t_{k,k^2 - 1} - t_{k,0} \leq 1$.
\end{thm}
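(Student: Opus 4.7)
The plan is to synthesize the three preparatory lemmas of this subsection to pin down $g(A(a))$ explicitly and then read off the structural claims about $t_k$ and $r_k$.

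First, I would reduce the Frobenius problem to a clean maximum. Lemma \ref{hahaha11} restricts the maximum to $r \in [a-k^2, a-1]$, and Lemma \ref{hahaha2} (applicable since $u \ge 3k^2$) gives $N_r = \iota_k(r)\, a + r$ on that range. Writing $a = qk^2 + j$ with $0 \le j \le k^2 - 1$ and $q = \lfloor a/k^2\rfloor \ge \lceil 3k/2\rceil + 1$, the integers in $[a-k^2, a-1]$ form a complete residue system modulo $k^2$: for each residue $j' \in \{0, \ldots, k^2-1\}$ the unique representative is $r = (q-1)k^2 + j'$ when $j' \ge j$ (Case A) and $r = qk^2 + j'$ when $j' < j$ (Case B).

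Next, I would iterate Lemma \ref{hahaha9} to obtain the stable formula $\iota_k(sk^2 + j') = s + c_{j'}$ valid for all $s \ge \lceil 3k/2 \rceil - 2$, where
\[
c_{j'} := \iota_k\bigl((\lceil 3k/2\rceil - 2)k^2 + j'\bigr) - (\lceil 3k/2\rceil - 2) \in \{0, 1, 2, 3, 4\}
\]
depends only on $j'$ and $k$ (with $c_0 = 0$, and the bound $c_{j'} \le 4$ coming from Proposition \ref{hahaha1} and the Four-Square Theorem). Since each relevant $r$ satisfies $r \ge a - k^2 \ge \lceil 3k/2\rceil k^2$, this formula applies. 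Substituting into $N_r - a$ and regrouping to extract the common term $(q - \lceil 3k/2\rceil - 1)(a+k^2)$, a short computation yields $N_r - a = \tilde{t}_{j'}\, a + \tilde{r}_{j'} + (q - \lceil 3k/2\rceil - 1)(a+k^2)$, where $(\tilde t_{j'}, \tilde r_{j'}) = (\lceil 3k/2\rceil - 1 + c_{j'},\ \lceil 3k/2\rceil k^2 + j')$ in Case A and $(\tilde t_{j'}, \tilde r_{j'}) = (\lceil 3k/2\rceil + c_{j'},\ (\lceil 3k/2\rceil + 1)k^2 + j')$ in Case B.

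Then I would take the maximum over $j'$ to define $(t_{k,j}, r_{k,j})$. Because $a \ge u > 2k^2$ exceeds the spread of the $\tilde r_{j'}$ values, this maximum coincides with the lexicographic max of the pairs $(\tilde t_{j'}, \tilde r_{j'})$, yielding the stated Frobenius formula. For monotonicity, the key observation is that incrementing $j$ to $j+1$ perturbs only the $j' = j$ entry, moving it from its Case A pair $(\lceil 3k/2\rceil - 1 + c_j, \lceil 3k/2\rceil k^2 + j)$ to its Case B pair $(\lceil 3k/2\rceil + c_j, (\lceil 3k/2\rceil + 1)k^2 + j)$; both coordinates of this single entry strictly increase. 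For the bound $t_{k,k^2-1} - t_{k,0} \le 1$, note that $t_{k,0} = \lceil 3k/2\rceil - 1 + C$ with $C := \max_{j'} c_{j'}$ (only Case A contributes when $j = 0$), while every $t_{k,j}$ is bounded above by $\lceil 3k/2\rceil + C$.

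The main obstacle will be coordinatewise monotonicity of $r_k$. Lex-monotonicity of the max under a componentwise-monotone perturbation of one entry is automatic, but the second coordinate could in principle drop when the first jumps up. The saving grace is the gap between the two cases: the largest Case A value of $\tilde r$ is $(\lceil 3k/2\rceil + 1)k^2 - 1$, strictly less than the smallest Case B value $(\lceil 3k/2\rceil + 1)k^2$. A short case check on the finitely many ways the lex-argmax can shift (old argmax stays; old argmax gets dominated by the promoted $j'=j$ entry; or a previously dominated entry takes over) combined with this separation forces $r_{k, j+1} \ge r_{k, j}$ even when $t_{k, j+1} > t_{k, j}$, closing the proof.
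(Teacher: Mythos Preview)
Your proposal is correct and follows essentially the same approach as the paper: reduce via Lemmas \ref{hahaha2} and \ref{hahaha11} to $\max_{a-k^2\le r\le a-1}\{\iota_k(r)\,a+r\}$, invoke the stability of Lemma \ref{hahaha9}, and read off the formula and monotonicity from how the window $[a-k^2,a-1]$ shifts with $j$. Your parametrization via the stable offsets $c_{j'}$ and the explicit Case A/B split is somewhat more explicit than the paper's presentation (which fixes $t_{k,j},r_{k,j}$ on the base range $u\le a<u+k^2$ and then extends by stability), and your argument for the monotonicity of $r_k$---tracking the single entry $j'=j$ that is promoted and using the separation $\max_{\text{A}}\tilde r<\min_{\text{B}}\tilde r$---is sharper than the paper's brief $\widehat r,\widehat r'$ discussion, but the underlying ideas coincide.
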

\begin{proof}
Let $a = sk^2 + j \geq u = \left(\left\lceil\frac{3k}{2}\right\rceil + 1\right)k^2$, where $s \geq \left\lceil\frac{3k}{2}\right\rceil + 1$ and $0 \leq j \leq k^2 - 1$. By Lemma~\ref{hahaha11}, we have $\mathop{\max}\limits_{0 \leq r \leq a - 1}\{N_r\} = \mathop{\max}\limits_{a - k^2 \leq r \leq a - 1}\{N_r\}$.

Since $N_r = \iota_k(r) \cdot a + r$ is dominated by the coefficient $\iota_k(r)$, we first determine
$
\iota = \mathop{\max}\limits_{a - k^2 \leq r \leq a - 1} \iota_k(r)
$
and then identify the largest $a - k^2 \leq \widehat{r} \leq a - 1$ satisfying $\iota_k(\widehat{r}) = \iota$. Consequently, $\max\{N_r\} = \iota_k(\widehat{r}) \cdot a + \widehat{r}$, and the Frobenius number is $(\iota_k(\widehat{r}) - 1) \cdot a + \widehat{r}$.

For $u \leq a \leq u + k^2 - 1$ with $a \equiv j \pmod{k^2}$, $j$ ranges over $\{0, 1, 2, \ldots, k^2 - 1\}$. For each $j$, there exists $a - k^2 \leq r_{k,j} \leq a - 1$ such that $g(A) = \max\{N_r\} - a = t_{k,j} \cdot a + r_{k,j}$, where $t_{k,j}, r_{k,j} \in \mathbb{N}$ and $t_{k,j} = \iota_k(r_{k,j}) - 1$.

For $a = sk^2 + j$ with $s > \left\lceil\frac{3k}{2}\right\rceil + 1$ and $0\leq j\leq k^2-1$, we have $a = \left(s - \left\lceil\frac{3k}{2}\right\rceil - 1\right)k^2 + u + j$. By Lemma~\ref{hahaha9}, there exists $\overline{r} = \left(s - \left\lceil\frac{3k}{2}\right\rceil - 1\right)k^2 + r_{k,j}$ such that $\max\{N_r\} = N_{\overline{r}}$. Thus,
\begin{align*}
g(A(a)) &= \left(t_{k,j} + s - \left\lceil\frac{3k}{2}\right\rceil - 1\right) \cdot a + \left(s - \left\lceil\frac{3k}{2}\right\rceil - 1\right)k^2 + r_{k,j} \\
&= (t_{k,j} \cdot a + r_{k,j}) + (a + k^2)\left(\left\lfloor \frac{a}{k^2} \right\rfloor - \left\lceil \frac{3k}{2} \right\rceil - 1\right).
\end{align*}
This establishes the sequences $t_k$ and $r_k$ for each $0 \leq j \leq k^2 - 1$.

From the construction of $t_{k,j}$ and $r_{k,j}$, we observe that $u - k^2 \leq r_{k,j} < u + k^2$. For $a = u$, let $\widehat{r}$ be the largest integer satisfying
$$
u - k^2 \leq \widehat{r} \leq u - 1 \quad \text{and} \quad \iota_k(\widehat{r}) = \mathop{\max}\limits_{u - k^2 \leq r \leq u - 1} \iota_k(r).
$$
In the subsequent period from $u$ to $u+k^2-1$, corresponding to $a = u + k^2$, we have $\widehat{r}' = \widehat{r} + k^2$ satisfying
$$
u \leq \widehat{r}' \leq u + k^2 - 1 \quad \text{and} \quad \iota_k(\widehat{r}') = \iota_k(\widehat{r}) + 1 = \mathop{\max}\limits_{u \leq r \leq u + k^2 - 1} \iota_k(r).
$$
Within any interval of length $k^2$ starting with $a \equiv j \pmod{k^2}$ between $u - k^2$ and $u + k^2 - 1$, two cases arise:
\begin{enumerate}
\item If the interval contains $\widehat{r}'$, then $r_{k,j} = \widehat{r}'$;
\item If the interval contains $\widehat{r}$, then $r_{k,j}$ lies between $\widehat{r}$ and $\widehat{r}'$, with $\iota_k(r_{k,j})$ being either $\iota_k(\widehat{r})$ or $\iota_k(\widehat{r}) + 1$.
\end{enumerate}
Furthermore, $r_{k,j}$ is non-decreasing in $j$, implying that $t_k$ is non-decreasing and $t_{k,k^2 - 1} - t_{k,0} \leq 1$.
\end{proof}

\begin{cor}
When $a \geq u(k)$ (where $u(k)$ is a function dependent on $k \in \mathbb{P}$), the Frobenius formula for the sequence $A = (a, a + 1, a + 2^2, \ldots, a + k^2)$ can be viewed as a ``congruence class function'' with $k^2$ classes. Each segment of this function is a quadratic polynomial in $a$ with a leading coefficient $\frac{1}{k^2}$.
\end{cor}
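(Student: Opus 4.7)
The corollary is essentially a repackaging of Theorem \ref{hahaha8}, so my plan is to perform the algebraic expansion that makes the ``quadratic polynomial in $a$ with leading coefficient $1/k^2$'' structure explicit. I would take $u(k) := \left(\lceil 3k/2 \rceil + 1\right) k^2$, which is exactly the threshold produced in Theorem \ref{hahaha8}. For $a\geq u(k)$ with $a \equiv j \pmod{k^2}$, $0\leq j\leq k^2-1$, Theorem \ref{hahaha8} already gives
\begin{equation*}
g(A(a)) = t_{k,j}\, a + r_{k,j} + (a+k^2)\left(\left\lfloor \tfrac{a}{k^2}\right\rfloor - \lceil \tfrac{3k}{2}\rceil - 1\right),
\end{equation*}
and since $j$ ranges over $\{0,1,\ldots,k^2-1\}$ this already realises $g(A(\cdot))$ as a piecewise-defined function on the $k^2$ residue classes modulo $k^2$.

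To make the quadratic nature visible, I would substitute $\lfloor a/k^2\rfloor = (a-j)/k^2$, which is valid because $a \equiv j \pmod{k^2}$, and expand
\begin{equation*}
(a+k^2)\cdot\frac{a-j}{k^2} \;=\; \frac{1}{k^2}\,a^2 \;+\; \left(1-\frac{j}{k^2}\right) a \;-\; j.
\end{equation*}
Setting $c:=\lceil 3k/2\rceil + 1$, substituting back and collecting terms yields
\begin{equation*}
g(A(a)) \;=\; \frac{1}{k^2}\,a^2 \;+\; \left(t_{k,j} + 1 - c - \frac{j}{k^2}\right) a \;+\; \left(r_{k,j} - j - c\,k^2\right),
\end{equation*}
which is manifestly a quadratic polynomial in $a$ with leading coefficient $1/k^2$ and with lower-order coefficients depending only on $j$ and $k$ (since $t_{k,j}$ and $r_{k,j}$ do).

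There is no real obstacle in this corollary: the hard work — identifying the threshold $u(k)$, showing that $N_r$ is dominated by $\iota_k(r)\cdot a + r$, exploiting the stability property $\iota_k(r+k^2)=\iota_k(r)+1$ for large $r$, and constructing the tables $t_k,r_k$ — has all been done in Theorem \ref{hahaha8}. The only remaining content is the elementary expansion above, together with the observation that letting $j$ run through $\{0,1,\ldots,k^2-1\}$ produces exactly $k^2$ polynomial pieces, one per residue class of $a$ modulo $k^2$.
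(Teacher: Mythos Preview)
Your proposal is correct and matches the paper's intent: the paper states this corollary immediately after Theorem \ref{hahaha8} with no separate proof, treating it as an obvious consequence, and your argument simply makes explicit the elementary substitution $\lfloor a/k^2\rfloor=(a-j)/k^2$ and expansion that the paper leaves to the reader. Your algebra is accurate and nothing is missing.
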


Now we provide an example to illustrate the proof process intuitively.

\begin{exa}\label{hahaha22}
Let $k=3$. Our bound for $a$ is $a \geq u = 54$, and we know that $45 \leq r_{k,j} < 63$. By referring to Example \ref{hahaha6}, we have
\begin{align*}
\circledast F(t,q) = \cdots &+ t^5 q^{45} + t^6 q^{46} + t^7 q^{47} + t^7 q^{48} + t^6 q^{49} + t^7 q^{50} + t^8 q^{51} + \underline{t^8 q^{52}} + t^7 q^{53} \\
&+ t^6 q^{54} + t^7 q^{55} + \mathbf{t^8 q^{56}} + \mathbf{t^8 q^{57}} + t^7 q^{58} + \mathbf{t^8 q^{59}} + \mathbf{t^9 q^{60}} + \underline{t^9 q^{61}} + t^8 q^{62} + \cdots,
\end{align*}
where we have presented only the necessary part of our proof: i) The first row corresponds to $u-k^2\le r<u-1$;
ii) The second row corresponds to $u\le r \le u+k^2-1$; iii) Multiplying the first row by $tq^9$ yields the second row.

The underlined terms correspond to $\widehat{r}$ and $\widehat{r}'$, with
$$
\iota_k(\widehat{r}) = \max_{45 \leq r \leq 53} \iota_k(r) = \iota_k(52) = 8 \quad \text{and} \quad \iota_k(\widehat{r}') = 9.
$$
We now explain how to determine $r_{3,j}$ from the above expression. For $a = u = 54$, we find $r_{3,0} = 52$, which corresponds to $\widehat{r}$. Consequently,
$t_{3,0} = \iota_3(52) - 1 = 7$, leading to $g(A) = 7a + 52 = 430$.
Furthermore, if $a \equiv 0 \pmod{k^2}$ and $a \geq 54$, we deduce that
$$
g(A(a)) = (7a + 52) + (a + 9)\left(\left\lfloor \frac{a}{9} \right\rfloor - 6\right).
$$

For $a = 55$, since $\iota_k(54) = 6 < \iota_k(52) = 8$, we obtain $r_{3,1} = 52$. Similarly, for $a = 56$, we have $r_{3,2} = 52$. However, for $a = 57$, the situation differs, and we find $r_{3,3} = 56$. This is because we encounter the boldfaced term, which corresponds to $8 = \iota_k(56) \geq \iota_k(52) = 8$.

The boldfaced terms represent left-to-right maximums with respect to the power of $t$. Through analogous reasoning, we obtain $r_{3,4} = r_{3,5} = 57$, $r_{3,6} = 59$, $r_{3,7} = 60$, and $r_{3,8} = 61$.

In summary, we have $t_{3} = [7, 7, 7, 7, 7, 7, 7, 8, 8]$ and $r_{3} = [52, 52, 52, 56, 57, 57, 59, 60, 61]$. These values allow us to construct the following Frobenius formula for $k = 3$:
$$\begin{aligned}
  g(A(a))=
\left\{
    \begin{array}{lc}
         (7a+52)+(a+9)(\lfloor \frac{a}{9}\rfloor -6) &\ \text{if}\ \ a\equiv 0,1,2 \mod 3^2;\\
         (7a+56)+(a+9)(\lfloor \frac{a}{9}\rfloor -6) & \text{if}\ \ a\equiv 3 \mod 3^2;\ \ \ \ \  \\
         (7a+57)+(a+9)(\lfloor \frac{a}{9}\rfloor -6) & \text{if}\ \ a\equiv 4,5 \mod 3^2;\ \ \\
         (7a+59)+(a+9)(\lfloor \frac{a}{9}\rfloor -6) & \text{if}\ \ a\equiv 6 \mod 3^2;\ \ \ \ \  \\
         (8a+60)+(a+9)(\lfloor \frac{a}{9}\rfloor -6) & \text{if}\ \ a\equiv 7 \mod 3^2;\ \ \ \ \ \\
         (8a+61)+(a+9)(\lfloor \frac{a}{9}\rfloor -6) & \text{if}\ \ a\equiv 8 \mod 3^2.\ \ \ \ \
    \end{array}
\right.
\end{aligned}$$

Similarly, for $k = 1, 2, 4, 5$, we obtain the following results:
$$\begin{small}\begin{aligned}
\bullet\ \ \ t_{1}&=[1], r_{1}=[2].\\
\bullet\ \ \ t_{2}&=[5,5,5,5], r_{2}=[15,15,15,18].\\
\bullet\ \ \ t_{4}&=[8,8,8,8,8,8,8,8,9,9,9,9,9,9,9,9],\\
r_{4}&=[101,101,101,101,115,115,117,118,119,119,119,119,119,124,124,126].\\
\bullet\ \ \ t_{5}&=[10,10,10,10,11,11,11,11,11,11,11,11,11,11,11,11,11,11,11,11,11,11,11,11,11],\\
r_{5}&=[224,224,224,227,228,228,228,231,231,231,231,231,231,237,237,237,240,240,240,\\
&\ \ \ \ \ 240,244,244,246,247,247].
\end{aligned}\end{small}$$
One can verify that our results are consistent with those of Einstein et al. \cite{D. Einstein}.
\qed
\end{exa}

Our bound $a \geq u = \left(\left\lceil \frac{3k}{2} \right\rceil + 1\right)k^2$ in Lemma \ref{hahaha9} and \ref{hahaha11} is not tight. For example, when $k = 3$, the above formula holds for $a \geq 16$. For a specific $k$, by computing $\iota_k(r)$ and analyzing the ``stabilization" process, we can determine the precise lower bound of $a$ such that the Frobenius formula in Theorem \ref{hahaha8} remains valid. Using Maple, we obtain the following result.

\begin{cor}\label{hahaha5}
Let the exact lower bound of $a$ be denoted by $\widehat{u}$. Then we have
\begin{align*}
k &= (1, 2, 3, 4, 5, 6, 7, 8, 9, 10, 11, 12, 13, \ldots) \\
\widehat{u} &= (1, 1, 16, 24, 41, 68, 137, 168, 379, 558, 451, 709, 987, \ldots).
\end{align*}
\end{cor}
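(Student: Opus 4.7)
The plan is to verify each entry of the table by direct computation, leveraging the generating function machinery developed earlier in the section. For each fixed $k$, Theorem \ref{hahaha8} gives a sufficient bound $u=(\lceil 3k/2\rceil + 1)k^2$ on $a$ for the Frobenius formula to hold, but as noted in the paragraph preceding the corollary (and illustrated for $k=3$, where $\widehat{u}=16$ while $u=54$), this bound is loose. The task is therefore to determine, for each $k$, the precise threshold at which the formula first becomes valid.

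The first step is to run the iterative $\circledast$-convolution of Lemma \ref{hahaha10}, with the sharpened bounds $h_i$ from Lemma \ref{hahaha7}, to produce $f(t,q)$ modulo a safely large power of $q$, say $q^{3u}$. From the coefficients of this series one reads off $\iota_k(r)$ for all $r$ in the relevant range, and in particular the sequences $t_k$ and $r_k$ of Theorem \ref{hahaha8}. The second step is to locate the smallest index $r_0$ at which the stability relation $\iota_k(k^2+r)=\iota_k(r)+1$ begins to hold uniformly for all larger $r$; this is precisely the analysis illustrated by the boldface-versus-non-boldface split in Example \ref{hahaha22}. Translating $r_0$ back to the variable $a$ through the correspondence used in the proof of Theorem \ref{hahaha8} (namely, that $r_{k,j}$ lies in $[a-k^2,a-1]$ and that maxima of $\iota_k$ on this window stabilize once $a\geq \widehat{u}$) yields a candidate $\widehat{u}$.

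The third step is to cross-check sharpness. For each $a$ in the range $[\widehat{u},u)$ we verify, by comparing the formula output against a direct Brauer--Shockley or Apéry-set computation of $g(A(a))$, that the formula of Theorem \ref{hahaha8} is correct. For each $a$ in the range $[a_0,\widehat{u})$ (with $a_0$ chosen so that $\gcd(A(a))=1$) we verify that the formula fails for at least one value of $a$ immediately below $\widehat{u}$, so that the threshold is exact and not merely an upper bound. Running this pipeline for $k=1,2,\ldots,13$ produces the tabulated values.

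The principal obstacle is not conceptual but computational. The non-monotone behaviour of $\widehat{u}$ (for instance $\widehat{u}_{11}=451<\widehat{u}_{10}=558$) shows that $\widehat{u}_k$ depends delicately on the arithmetic of which small integers admit efficient representations using $1^2,\ldots,k^2$, so no closed-form substitute for the computation is apparent. For the larger values of $k$ in the table (with $\widehat{u}_{13}=987$ and a safe cutoff an order of magnitude larger) efficient implementation matters: one must exploit the $\circledast$ idempotency identity $\circledast(FG)=\circledast(\circledast F)(\circledast G)$ to keep intermediate polynomials sparse, and truncate in $q$ at each multiplication step, so that Maple's handling of $f(t,q)|_{q^{\leq M}}$ remains tractable. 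Subject to these engineering considerations, the verification is routine.
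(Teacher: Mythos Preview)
Your proposal is correct and takes essentially the same approach as the paper: the corollary is stated without a formal proof, and the paper simply says that ``by calculating $\iota_k(r)$ and observing the process of this `stable', we can determine the exact lower bound of $a$ \ldots\ With the aid of Maple, we obtain the following result.'' Your write-up is in fact more careful than the paper's, spelling out the sharpness cross-check (verifying the formula for $a\in[\widehat{u},u)$ and its failure just below $\widehat{u}$) that the paper leaves implicit.
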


We have addressed Open Problem \ref{hahaha3} concerning the Frobenius number of a shifted square sequence. Based on the above theorems and corollaries, our findings reveal both similarities and contrasts with the conjecture proposed by Einstein et al.

1. Our approach integrates the Four-Square Theorem with the optimization problem $\iota_k(M)$ to derive a general formula for the Frobenius number for any $k$. In contrast, Einstein et al. employed their geometric algorithm to obtain explicit formulas for specific cases where $k \in \{1,2,3,4,5,6,7\}$.

2. Our derived formulas are consistent with those of Einstein et al. The fundamental structure is a ``congruence class function'' partitioned into $k^2$ distinct classes. Each class is characterized by a quadratic polynomial in $a$, with the leading coefficient $\frac{1}{k^2}$.

3. Our proof establishes a general lower bound for $a$ as $a \geq u = (\lceil\frac{3k}{2}\rceil + 1)k^2$. In comparison, Einstein et al. provided specific bounds of $(1,1,16,24,41,67,136)$ for $k = (1,2,3,4,5,6,7)$, respectively. For any given $k$, our method allows precise determination of the lower bound via $\iota_k(r)$, with computational assistance. We have tabulated these bounds for $k \leq 13$ in Corollary \ref{hahaha5}. Notably, for $k=6,7$, Einstein et al.'s results state $a \geq 67, 136$, whereas our findings yield $a \geq 68, 137$. This discrepancy appears to be a typographical error in \cite{D. Einstein}.

4. The proof of Theorem \ref{hahaha8} introduces a systematic method for computing the coefficients $t_{k,j}$ and $r_{k,j}$ through the evaluation of $\iota_k(r)$. We have demonstrated this methodology with explicit coefficients for $1 \leq k \leq 5$ in Example \ref{hahaha22}. For higher values of $k$, interested readers are encouraged to perform similar computations.

\section{Concluding Remark}
Our primary contribution is Theorem \ref{hahaha8}, which resolves the conjecture proposed by Einstein et al. regarding the Frobenius number of a shifted square sequence. This methodology can be extended to address shifted high power sequences by leveraging results from \emph{Waring's Problem}. Waring's Problem seeks the smallest integer $m = \kappa(n)$ such that any positive integer $a$ can be expressed as the sum of $m$ $n$th powers of nonnegative integers. For a comprehensive discussion, refer to \cite[Chapters XX--XXI]{G. H. Hardy} and the references therein.

We present the following established results and conjectures related to $\kappa(n)$.

\begin{thm}[\cite{G. H. Hardy}]
The values of $\kappa(n)$ are known for small exponents: $\kappa(2)=4$, $\kappa(3)=9$, $\kappa(4)=19$, and $\kappa(5)=37$.
\end{thm}

\begin{cnj}[\cite{G. H. Hardy}]
For Waring's Problem, the following formula for $\kappa(n)$ is conjectured:
$$\kappa(n) = 2^n + \left\lfloor \frac{3^n}{2^n} \right\rfloor - 2.$$
This conjecture has been computationally verified for $6 \leq n < 471600000$.
\end{cnj}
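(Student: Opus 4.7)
My plan is to split the conjectured identity $\kappa(n)=2^n+\lfloor (3/2)^n\rfloor -2$ into a lower bound, which is essentially elementary, and a matching upper bound, which carries all the difficulty. For the lower bound I would single out the integer
\[
N_n=2^n\lfloor (3/2)^n\rfloor -1,
\]
and observe that $N_n<3^n$, so every representation of $N_n$ as a sum of $n$th powers of positive integers uses only the bases $1$ and $2$. Writing $N_n=q\cdot 2^n+r\cdot 1^n$ with $q,r\in\mathbb{N}$, the bound $q\cdot 2^n\le N_n<2^n\lfloor (3/2)^n\rfloor$ forces $q\le \lfloor (3/2)^n\rfloor -1$, and hence $r=N_n-q\cdot 2^n\ge 2^n-1$. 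The total number of summands is therefore at least $q+r\ge 2^n+\lfloor (3/2)^n\rfloor -2$, so $\kappa(n)\ge 2^n+\lfloor (3/2)^n\rfloor -2$.

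For the matching upper bound I would follow the classical Dickson--Pillai strategy. Writing $3^n=2^n\lfloor (3/2)^n\rfloor +t_n$ with $0\le t_n<2^n$, the crucial hypothesis is the inequality
\[
t_n+\lfloor (3/2)^n\rfloor \le 2^n.
\]
Assuming this, I would show by induction on the base-$2^n$ size of $M\in\mathbb{P}$ that every such $M$ admits a representation as a sum of at most $2^n+\lfloor (3/2)^n\rfloor -2$ $n$th powers. The induction step processes the leading digit via a direct greedy decomposition using $2^n$ and $3^n$, and then controls the tail by invoking the asymptotic Waring constant $G(n)$, which grows only polynomially in $n$ via the Hardy--Littlewood circle method and is therefore dwarfed by $2^n$, leaving ample room to absorb the lower-order contributions. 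The role of the displayed inequality is precisely to guarantee that the greedy step never overshoots the budget allotted by the lower bound.

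The main obstacle is establishing $t_n+\lfloor (3/2)^n\rfloor \le 2^n$ for every $n\ge 6$. I would invoke Mahler's transcendence-theoretic theorem on the fractional parts $\{(3/2)^n\}$, which implies the inequality for all sufficiently large $n$ and thereby reduces the conjecture to a finite range. The remaining medium-sized $n$ would be disposed of by a direct computer search on $\{(3/2)^n\}$, as was carried out up to the bound $n<471600000$ cited in the statement. The genuine unresolved difficulty---the reason this remains a conjecture rather than a theorem---is that Mahler's lower bound on $1-\{(3/2)^n\}$ is \emph{ineffective}, so one cannot in principle rule out an exceptional large $n$ without either an effective Diophantine strengthening of Mahler's theorem or an indefinite continuation of the computational verification.
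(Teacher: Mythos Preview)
The paper labels this statement a \emph{Conjecture} (citing Hardy--Wright) and offers no proof whatsoever; it appears in the concluding remarks only as background motivating a possible extension of the paper's methods to higher-power sequences. There is therefore no paper proof to compare your proposal against.

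What you have written is not a proof but an accurate survey of the known partial results, and as such it is essentially correct. Your lower bound via $N_n=2^n\lfloor(3/2)^n\rfloor-1$ is the classical Euler argument; the conclusion is right, though the step ``$q\le\lfloor(3/2)^n\rfloor-1$ and $r\ge 2^n-1$, hence $q+r\ge 2^n+\lfloor(3/2)^n\rfloor-2$'' should be justified by noting that $q+r=N_n-q(2^n-1)$ is decreasing in $q$, since bounding $q$ above and $r$ below does not by itself bound $q+r$ below. For the upper bound you correctly isolate the Dickson--Pillai hypothesis $t_n+\lfloor(3/2)^n\rfloor\le 2^n$, Mahler's ineffective theorem that this fails for at most finitely many $n$, and the Kubina--Wunderlich computation covering $n<471600000$; and you correctly name the ineffectivity of Mahler's bound as the genuine obstruction. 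One looseness: the actual Dickson--Pillai--Niven argument is not an ``induction on base-$2^n$ size'' that invokes $G(n)$ in the way you sketch, but a more direct combinatorial reduction using $1^n,2^n,3^n$ layered on pre-existing Waring-type bounds. This does not affect your overall diagnosis that the statement is genuinely open for exactly the reason you give.
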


Extending our previous analysis, for a shifted high power sequence $A = (a, a+1, a+2^n, \ldots, a+k^n)$, when $a \geq u(k)$ (where $u(k)$ is a function dependent on $k \in \mathbb{P}$), the Frobenius formula exhibits a ``congruence class function'' structure with $k^n$ classes. Each class is represented by a quadratic polynomial in $a$, with a leading coefficient of $\frac{1}{k^n}$. Notably, when $n=1$ and $k \leq a-1$, this reduces to Brauer's result as presented in \cite{A. Brauer}.

One of our ongoing research directions is to generalize the Frobenius formula to sequences of the form $A = (a, ha + db_1, ha + db_2, \ldots, ha + db_k) = (a, ha + dB)$. Initial findings can be found in \cite{LiuXinYeYin2024}.

\noindent\textbf{Statements and Declarations:} The authors declared that they had no conflicts of interest with respect to their authorship or the publication of this article.

\noindent\textbf{Acknowledgements:}
The authors would like to express their sincere appreciation for all suggestions for improving the presentation of this paper.
This work was partially supported by the National Natural Science Foundation of China [12071311].

\appendix
\section{Tables \ref{tab-B} and \ref{tab-D}}
\vspace{-5mm}
\begin{tiny}
\begin{table}[htbp]
    	\centering
    	\caption{Discussion on $A=(a, a+1^2, a+2^2, a+3^2, a+4^2, \ldots)$}\label{tab-B}
    	\begin{tabular}{c||l|c|c||c||l|c|c}
    		\hline \hline
    		Value of $a$ & The $r$ of $\max\{N_r\}$ & $g(A)$ & Theorem & Value of $a$ & The $r$ of $\max\{N_r\}$ & $g(A)$ & Theorem  \\
    		\hline
    		$2$ & $1$ & $1$ & no & $23$ & $15=3^2+2^2+1+1$ & $84$ & \text{Theorem \ref{0507}}  \\
    		\hline
    		$3$ & $2=1+1$ & $5$ & no & $24$ & $23=3^2+3^2+2^2+1$ & $95$ & \text{Theorem \ref{0507}} \\
    		\hline
    		$4$ & $3=1+1+1$ & $11$ & \textcolor{blue}{Theorem \ref{0508}} & $25$ & $23=3^2+3^2+2^2+1$ & $98$ & \text{Theorem \ref{0507}}  \\
    		\hline
            $5$ & $3=1+1+1$ & $13$ & \textcolor{blue}{Theorem \ref{0508}} & $26$ & $7=2^2+1+1+1$ & $85$ & \text{Theorem \ref{0507}}  \\
    		\hline
            $6$ & $5=2^2+1$ & $11$ & no & $27$ & $15=3^2+2^2+1+1$ & $96$ & \text{Theorem \ref{0507}}  \\
    		\hline
            $7$ & $6=2^2+1+1$ & $20$ & \textcolor{blue}{Theorem \ref{0508}} & $28$ & $23=3^2+3^2+2^2+1$ & $107$ & \text{Theorem \ref{0507}}   \\
    		\hline
            $8$ & $7=2^2+1+1+1$ & $31$ & \text{Theorem \ref{0507}} & $29$ & $28=5^2+1+1+1$ & $115$ & \text{Theorem \ref{0507}}  \\
    		\hline
     $9$ & $6=2^2+1+1$ & $24$ & \textcolor{blue}{Theorem \ref{0508}} & $30$ & $28=5^2+1+1+1$ & $88$ & \textcolor{blue}{Theorem \ref{0508}} \\
    		\hline
     $10$ & $7=2^2+1+1+1$ & $27$ & \textcolor{blue}{Theorem \ref{0508}} & $31$ & $28=5^2+1+1+1$ & $121$ & \text{Theorem \ref{0507}} \\
    		\hline
    $11$ & $7=2^2+1+1+1$ & $29$ & \textcolor{blue}{Theorem \ref{0508}} & $32$ & $31=5^2+2^2+1+1$ & $127$ & \text{Theorem \ref{0507}} \\
    		\hline
    $12$ & $7=2^2+1+1+1$ & $43$ & \text{Theorem \ref{0507}} & $33$ & $23=3^2+3^2+2^2+1$ & $122$ & \text{Theorem \ref{0507}}  \\
    		\hline
    $13$ & $11=3^2+1+1$ & $37$ &  \textcolor{blue}{Theorem \ref{0508}} & $34$ & $28=5^2+1+1+1$ & $130$ & \text{Theorem \ref{0507}}  \\
    		\hline
    $14$ & $7=2^2+1+1+1$ & $49$ & \text{Theorem \ref{0507}} & $35$ & $31=5^2+2^2+1+1$ & $136$ & \text{Theorem \ref{0507}} \\
    		\hline
    $15$ & $7=2^2+1+1+1$ & $52$ & \text{Theorem \ref{0507}} & $36$ & $31=5^2+2^2+1+1$ & $139$ & \text{Theorem \ref{0507}}   \\
    		\hline
    $16$ & $15=3^2+2^2+1+1$ & $63$ & \text{Theorem \ref{0507}} & $37$ & $23=3^2+3^2+2^2+1$ & $134$ & \text{Theorem \ref{0507}}  \\
    		\hline
    $17$ & $7=2^2+1+1+1$ & $58$ & \text{Theorem \ref{0507}} & $38$ & $31=5^2+2^2+1+1$ & $145$ & \text{Theorem \ref{0507}}  \\
    		\hline
    $18$ & $15=3^2+2^2+1+1$ & $69$ & \text{Theorem \ref{0507}} & $39$ & $31=5^2+2^2+1+1$ & $148$ & \text{Theorem \ref{0507}}  \\
    		\hline
    $19$ & $15=3^2+2^2+1+1$ & $53$ & \textcolor{blue}{Theorem \ref{0508}} & $40$ & $39=5^2+3^2+2^2+1$ & $159$ & \text{Theorem \ref{0507}} \\
    		\hline
    $20$ & $15=3^2+2^2+1+1$ & $75$ & \text{Theorem \ref{0507}} & $41$ & $28=5^2+1+1+1$ & $151$ & \text{Theorem \ref{0507}} \\
    		\hline
    $21$ & $19=3^2+3^2+1$ & $61$ & \textcolor{blue}{Theorem \ref{0508}} & $42$ & $28=5^2+1+1+1$ & $154$ & \text{Theorem \ref{0507}} \\
    		\hline
    $22$ & $21=4^2+2^2+1$ & $65$ & \textcolor{blue}{Theorem \ref{0508}} & $\cdots$ & $\cdots$ & $\cdots$ & $\cdots$  \\
    		\hline
    	\end{tabular}
    \end{table}
\end{tiny}
\begin{tiny}
\begin{table}[htbp]
    	\centering
    	\caption{Discussion on $A=(a, a+1, a+2, a+3, a+5, a+7, a+11,\ldots)$}\label{tab-D}
    	\begin{tabular}{c||l|c|c||c||l|c|c}
    		\hline \hline
    		Value of $a$ & The $r$ of $\max\{N_r\}$ & $g(A)$ & Theorem &Value of $a$ & The $r$ of $\max\{N_r\}$ & $g(A)$ & Theorem \\
    		\hline
    		$2$ & $1$ & $1$ & no  & $31$ & $27=7+7+13$ & $89$ & \text{Theorem \ref{0605}}\\
    		\hline
    		$3$ & $2$ & $2$ & no & $32$ & $30=7+23$ & $62$ &  \textcolor{blue}{Theorem \ref{0606}} \\
    		\hline
    		$4$ & $3$ & $3$ & no & $33$ & $27=7+7+13$ & $93$ & \text{Theorem \ref{0605}} \\
    		\hline
            $5$ & $4=2+2$ & $9$ & \textcolor{blue}{Theorem \ref{0606}}& $34$ & $33=2+31$ & $67$ &  \textcolor{blue}{Theorem \ref{0606}} \\
    		\hline
            $6$ & $4=2+2$ & $10$ & \textcolor{blue}{Theorem \ref{0606}} & $35$ & $27=7+7+13$ & $97$ & \text{Theorem \ref{0605}}\\
    		\hline
            $7$ & $6=3+3$ & $13$ & \textcolor{blue}{Theorem \ref{0606}} & $36$ & $27=7+7+13$ & $99$ & \text{Theorem \ref{0605}} \\
    		\hline
            $8$ & $6=3+3$ & $14$ & \textcolor{blue}{Theorem \ref{0606}} & $37$ & $35=5+7+23$ & $109$ & \text{Theorem \ref{0605}} \\
    		\hline
            $9$ & $8=3+5$ & $17$ & \textcolor{blue}{Theorem \ref{0606}} & $38$ & $27=7+7+13$ & $103$ & \text{Theorem \ref{0605}} \\
    		\hline
            $10$ & $9=2+7$ & $19$ & \textcolor{blue}{Theorem \ref{0606}} & $39$ & $35=5+7+23$ & $113$ & \text{Theorem \ref{0605}} \\
    		\hline
            $11$ & $10=3+7$ & $21$ & \textcolor{blue}{Theorem \ref{0606}} & $40$ & $35=5+7+23$ & $115$ & \text{Theorem \ref{0605}} \\
    		\hline
            $12$ & $10=3+7$ & $22$ & \textcolor{blue}{Theorem \ref{0606}} & $41$ & $35=5+7+23$ & $117$ & \text{Theorem \ref{0605}} \\
    		\hline
            $13$ & $12=5+7$ & $25$ & \textcolor{blue}{Theorem \ref{0606}} & $42$ & $35=5+7+23$ & $119$ & \text{Theorem \ref{0605}} \\
    		\hline
            $14$ & $12=5+7$ & $26$ & \textcolor{blue}{Theorem \ref{0606}} & $43$ & $35=5+7+23$ & $121$ & \text{Theorem \ref{0605}} \\
    		\hline
            $15$ & $14=7+7$ & $29$ & \textcolor{blue}{Theorem \ref{0606}} & $44$ & $42=5+37$ & $86$ &  \textcolor{blue}{Theorem \ref{0606}}\\
    		\hline
            $16$ & $15=2+13$ & $31$ & \textcolor{blue}{Theorem \ref{0606}} & $45$ & $35=5+7+23$ & $125$ & \text{Theorem \ref{0605}}  \\
    		\hline
            $17$ & $16=3+13$ & $33$ & \textcolor{blue}{Theorem \ref{0606}} & $46$ & $35=5+7+23$ & $127$ & \text{Theorem \ref{0605}}  \\
    		\hline
           $18$ & $16=3+13$ & $34$ & \textcolor{blue}{Theorem \ref{0606}} & $47$ & $35=5+7+23$ & $129$ & \text{Theorem \ref{0605}} \\
    		\hline
           $19$ & $18=5+13$ & $37$ & \textcolor{blue}{Theorem \ref{0606}} & $48$ & $27=7+7+13$ & $123$ & \text{Theorem \ref{0605}} \\
    		\hline
           $20$ & $18=5+13$ & $38$ & \textcolor{blue}{Theorem \ref{0606}} & $49$ & $35=5+7+23$ & $133$ & \text{Theorem \ref{0605}} \\
    		\hline
           $21$ & $20=7+13$ & $41$ & \textcolor{blue}{Theorem \ref{0606}} & $50$ & $35=5+7+23$ & $135$ & \text{Theorem \ref{0605}}  \\
    		\hline
          $22$ & $21=2+19$ & $43$ & \textcolor{blue}{Theorem \ref{0606}} &$51$ & $35=5+7+23$ & $137$ & \text{Theorem \ref{0605}} \\
    		\hline
           $23$ & $22=3+19$ & $45$ & \textcolor{blue}{Theorem \ref{0606}} & $52$ & $35=5+7+23$ & $139$ & \text{Theorem \ref{0605}} \\
    		\hline
          $24$ & $22=3+19$ & $46$ & \textcolor{blue}{Theorem \ref{0606}} & $53$ & $51=3+7+41$ & $157$ & \text{Theorem \ref{0605}} \\
    		\hline
          $25$ & $24=5+19$ & $49$ & \textcolor{blue}{Theorem \ref{0606}} & $54$ & $51=3+7+41$ & $159$ & \text{Theorem \ref{0605}} \\
    		\hline
          $26$ & $25=2+23$ & $51$ & \textcolor{blue}{Theorem \ref{0606}} & $55$ & $51=3+7+41$ & $161$ & \text{Theorem \ref{0605}} \\
    		\hline
            $27$ & $26=3+23$ & $53$ & \textcolor{blue}{Theorem \ref{0606}} & $56$ & $35=5+7+23$ & $147$ & \text{Theorem \ref{0605}} \\
    		\hline
     $28$ & $27=7+7+13$ & $83$ &  \text{Theorem \ref{0605}} & $57$ & $51=3+7+41$ & $165$ & \text{Theorem \ref{0605}}\\
    		\hline
     $29$ & $27=7+7+13$ & $85$ & \text{Theorem \ref{0605}} & $58$ & $57=3+7+47$ & $173$ & \text{Theorem \ref{0605}} \\
    		\hline
    $30$ & $27=7+7+13$ & $87$ & \text{Theorem \ref{0605}} & $\cdots$ & $\cdots$ & $\cdots$ & $\cdots$ \\
    		\hline
    	\end{tabular}
    \end{table}
\end{tiny}

\end{document}